\newtheorem{theorem}{Theorem}
\newtheorem{corollary}[theorem]{Corollary}
\newtheorem{proposition}[theorem]{Proposition}
\newtheorem{remark}[theorem]{Remark}
\newtheorem{lemma}[theorem]{Lemma}
\def\re{\mathbb{R}}
\def\C{\mathbb{C}}
\def\N{\mathbb{N}}
\def\Sp{\mathbb{S}}
\def\eps{\varepsilon}
\def\pd{\partial}
\def\ol{\overline}
\def\wh{\widehat}
\def\la{\lambda}
\def\al{\alpha}
\def\th{\theta}
\def\disp{\displaystyle}
\def\({\left(}
\def\){\right)}
\def\pd{\partial}
\def\intRN{\int_{\mathbb{R}^N}}
\def\intS{\int_{\mathbb{S}^{N-1}}}
\def\bu{{\bm u}}
\def\bv{{\bm v}}
\def\bx{{\bm x}}
\def\be{{\bm e}}
\def\bsig{{\bm \sigma}}
\def\bphi{{\bm \varphi}}
\def\bvh{\widehat{{\bm v}}}
\begin{document}
\title[Hardy-Leray inequality]{Sharp Hardy-Leray and Rellich-Leray inequalities for curl-free vector fields \\}

\author[N. Hamamoto \and F. Takahashi]{Naoki Hamamoto \and Futoshi Takahashi}

\address{%
Department of Mathematics, Osaka City University \\
3-3-138 Sugimoto, Sumiyoshi-ku, Osaka 558-8585, Japan}

\email{yhjyoe@yahoo.co.jp {\rm (N.Hamamoto)}} 
\email{futoshi@sci.osaka-cu.ac.jp {\rm (F.Takahashi)}}

\begin{abstract}
In this paper, we prove Hardy-Leray and Rellich-Leray inequalities for curl-free vector fields with sharp constants.
This complements the former work by Costin-Maz'ya \cite{Costin-Mazya}
on the sharp Hardy-Leray inequality for axisymmetric divergence-free vector fields.
\end{abstract}

\subjclass[2010]{Primary 35A23; Secondary 26D10.}

\keywords{Hardy-Leray inequality, Rellich-Leray inequality, curl-free vector fields.}
\date{\today}

\dedicatory{}

\maketitle

\section{Introduction}


In this paper, we concern the classical functional inequalities called Hardy-Leray and Rellich-Leray inequality for smooth vector fields
and study how the best constants will change according to the pointwise constraints on their differentials.

Let $N \in \N$ be an integer with $N \ge 2$ and put $\bx=(x_1,\cdots,x_N) \in \re^N$. 
In the following, $C_c^\infty(\re^N)^N$ denotes a set of smooth vector fields 
\[
	\bu = (u_1,u_2,\cdots, u_N): \re^N \ni \bx \mapsto \bu(\bx) \in \re^N
\]
having compact supports on $\re^N$.
Let $\gamma \ne 1 - N/2$.
Then it is well known that 
\begin{equation}
\label{Hardy}
	\(\gamma + \frac{N}{2}-1 \)^2 \intRN \frac{|\bu|^2}{|\bx|^2} |\bx|^{2\gamma} d{x} \le \intRN |\nabla \bu|^2 |\bx|^{2\gamma} d{x}
\end{equation}
holds for any vector field $\bu \in C_c^\infty(\re^N)^N$ with $\bu({\bm 0}) = {\bm 0}$ if $\gamma < 1 - N/2$.
This is a higher dimensional extension of 1-dimensional inequality by G. H. Hardy, see \cite{Hardy}, also \cite{Opic-Kufner}, 
and was first proved by J. Leray \cite{Leray} in 1933 when the weight $\gamma = 0$, 
see also the book by Ladyzhenskaya \cite{Ladyzhenskaya}.
It is also known that the constant $\(\gamma + \frac{N}{2}-1 \)^2$ is sharp and never attained.
In \cite{Costin-Mazya}, 
Costin and Maz'ya proved that if the smooth vector fields are axisymmetric and subject to the divergence-free constraint ${\rm div} \, \bu = 0$,
then the constant in \eqref{Hardy} can be improved and replaced by a larger one. 
More precisely, they proved the following:

%
%

\vspace{1em}\noindent
{\bf Theorem A.} (Costin-Maz'ya \cite{Costin-Mazya})
{\it
Let $N \ge 3$. 
Let $\gamma \neq 1-N/2$ be a real number and $\bu \in C^\infty_c(\re^N)^N$ be an axisymmetric divergence-free vector field. 
Assume that $\bu({\bm 0})={\bm 0}$ if $\gamma <1-N/2$.
Then
\[
	C_{N,\gamma} \intRN \frac{|\bu|^2}{|\bx|^2} |\bx|^{2\gamma} d{x} \le \intRN |\nabla \bu|^2 |\bx|^{2\gamma} d{x}
\]
holds with the optimal constant $C_{N,\gamma}$ given by
\[  
	C_{N,\gamma} 
	= \begin{cases}
	\( \gamma + \frac{N}{2}-1 \)^2 \dfrac{N + 1 + \(\gamma - \frac{N}{2} \)^2}{N-1 + \(\gamma - \frac{N}{2}\)^2} & (\gamma \le 1), \\
	\(\gamma + \frac{N}{2}-1\)^2 + 2 + {\displaystyle\min_{\kappa \, \ge 0}} \(\kappa + \frac{4(N-1)(\gamma-1)}{\kappa + N-1 + \(\gamma - \frac{N}{2} \)^2} \) & (N \ge 4, \gamma > 1), \\
	\( \gamma + \frac{1}{2} \)^2 + 2 & (N = 3, \gamma > 1),
	\end{cases}
\]
}
\vspace{1em}

Note that the expression of the best constant $C(N, \gamma)$ is slightly different from that in \cite{Costin-Mazya} when $N \ge 4$,
but a careful checking the proof in \cite{Costin-Mazya} leads to the above formula in Theorem A.
Choosing $\gamma = 0$ in Theorem A,
we see that the best constant in \eqref{Hardy} is actually improved for axisymmetric divergence-free vector fields
in the sense that 
\[
	C_{N,0} \intRN \frac{|\bu|^2}{|\bx|^2} d{x} \le \intRN |\nabla \bu|^2 d{x}
\]
holds with the optimal constant $C_{N,0}= \(\frac{N}{2}-1\)^2 \frac{N^2+4N+4}{N^2+4N-4} > \(\frac{N-2}{2}\)^2$.

In 2-dimensional case, the result in \cite{Costin-Mazya} reads as follows:

%
%

\vspace{1em}\noindent
{\bf Theorem B.} (Costin-Maz'ya \cite{Costin-Mazya})
{\it
Let $\gamma \neq 0$ be a real number and $\bu \in C^\infty_c(\re^2)^2$ be a divergence-free vector field. 
We assume that $\bu({\bm 0})={\bm 0}$ if $\gamma < 0$ . 
Then
\[
	C_{2,\gamma} \int_{\re^2} \frac{|\bu|^2}{|\bx|^2} |\bx|^{2\gamma} d{x} \le \int_{\re^2}|\nabla \bu|^2 |\bx|^{2\gamma} d{x}
\]
holds with the optimal constant $C_{2,\gamma}$ given by
\[  
	C_{2,\gamma} = 
	\begin{cases}
	\gamma^2 \dfrac{3 + \(\gamma - 1\)^2}{1 + \(\gamma - 1\)^2} & \text{for} \quad |\gamma + 1| \le \sqrt{3}, \\
	\gamma^2 + 1 & \text{otherwise}.
	\end{cases}
\]
}
\vspace{1em}

When $N = 2$, $\bu$ in Theorem B need not be axisymmetric. 
Furthermore if we consider $\bu^{\perp} = (-u_2, u_1)$ for $\bu = (u_1, u_2)$ in Theorem B, 
then the condition ${\rm div} \, \bu = 0$ is replaced by ${\rm curl} \, \bu^{\perp} = 0$ and also $|\nabla \bu|^2 = |\nabla \bu^{\perp}|^2$. 
Thus the above inequality in Theorem B holds also for curl-free vector fields with the same constant.

Motivated by this observation,
our aim in this paper is to generalize Costin-Maz'ya's result for curl-free vector fields when $N=2$ to higher-dimensional cases.
In addition, we also consider the Rellich type inequality involving higher-order derivative, $\Delta \bu$, for curl-free vector fields.
We refer to \cite{Hamamoto} for the Rellich-Leray inequality for divergence-free vector fields. 

%
%
Now, main results of this paper are as follows:

%
%

\begin{theorem}
\label{Theorem:Hardy-Leray}
Let $\gamma \neq 1-N/2$ be a real number and let $\bu \in C_c^{\infty}(\re^N)^N$ be a curl-free vector field.
We assume that $\bu({\bm 0}) = {\bm 0}$ if $\gamma < 1-N/2$.
Then
\begin{equation}
\label{Eq:Hardy-Leray}
	H_{N,\gamma} \intRN  \frac{|\bu|^2}{|\bx|^2} |\bx|^{2\gamma} d{x} \le \intRN  |\nabla \bu|^2 |\bx|^{2\gamma} d{x}
\end{equation}
with the optimal constant $H_{N,\gamma}$ given by 
\begin{equation}
\label{H_N}
	H_{N,\gamma} = 
	\begin{cases}
	\(\gamma + \frac{N}{2}-1\)^2 \frac{3(N-1) + \(\gamma + \frac{N}{2}-2 \)^2}{N-1 + \(\gamma+\frac{N}{2}-2\)^2} & \text{if} \quad \big| \gamma + \frac{N}{2} \big| \le \sqrt{N+1}, \\ 
	\(\gamma + \frac{N}{2}-1\)^2 + N-1 & \text{otherwise}.
	\end{cases}
\end{equation}
\end{theorem}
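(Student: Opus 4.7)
My plan is to reduce the curl-free vector inequality \eqref{Eq:Hardy-Leray} to a scalar fourth-order weighted inequality for a potential $\phi$, decompose $\phi$ in spherical harmonics, and determine the sharp constant mode-by-mode via a Plancherel computation after an exponential change of variables. First I would write $\bu=\nabla\phi$: since $\re^N$ ($N\ge 2$) is simply connected and $\bu$ is curl-free, such a $\phi\in C^\infty(\re^N)$ exists; since $\bu$ is compactly supported and $\re^N\setminus\mathrm{supp}\,\bu$ is connected, $\phi$ is constant outside a compact set, so after adjusting by this constant one has $\phi\in C_c^\infty(\re^N)$, with $\nabla\phi(\bm{0})=\bm{0}$ under the corresponding hypothesis on $\bu$. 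Then $|\bu|^2=|\nabla\phi|^2$ and $|\nabla\bu|^2=|D^2\phi|^2$, so \eqref{Eq:Hardy-Leray} becomes the scalar weighted Rellich-type inequality
\[
H_{N,\gamma}\intRN|\nabla\phi|^2|\bx|^{2\gamma-2}\,d\bx \;\le\;\intRN|D^2\phi|^2|\bx|^{2\gamma}\,d\bx.
\]

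Next, in polar coordinates $\bx=r\bsig$, I would expand $\phi(r,\bsig)=\sum_{k\ge 0}f_k(r)Y_k(\bsig)$ in an orthonormal basis of spherical harmonics with $-\Delta_{\bsig}Y_k=\lambda_k Y_k$ and $\lambda_k=k(k+N-2)$. A weighted integration by parts expresses $\int|D^2\phi|^2|\bx|^{2\gamma}d\bx$ in terms of $\int(\Delta\phi)^2|\bx|^{2\gamma}d\bx$ together with cross terms involving $\Delta\phi\,\pd_r\phi$ and $|\nabla\phi|^2$ with $\gamma$-dependent coefficients; by orthogonality the whole inequality then splits into a sequence of one-dimensional weighted inequalities for the radial profiles $f_k$ on $(0,\infty)$. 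Writing $H_k$ for the sharp constant at mode $k$, the target constant is $H_{N,\gamma}=\inf_{k\ge 0}H_k$.

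To compute $H_k$ I would substitute $t=\log r$ and $f_k(r)=r^{2-N/2-\gamma}w_k(t)$, which absorbs the radial weight and puts the one-dimensional problem into constant-coefficient form on $\re$. Plancherel then reduces the determination of $H_k$ to pointwise nonnegativity on $\eta=\xi^2\ge 0$ of an explicit quadratic $\Phi_k(\eta)$ whose coefficients depend on $\gamma,N,\lambda_k$ through the parameters $A_k=(\gamma-1)^2-(k+N/2-1)^2$, $a=2-N/2-\gamma$ and $B=2(1-\gamma)$. The sharp $H_k$ is obtained by saturating $\Phi_k$ either at the boundary $\eta=0$ or at its interior minimum, yielding a rational formula. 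For $k=0$ the calculation collapses to the classical weighted one-dimensional Hardy inequality applied to $g=f_0'$, giving $H_0=(\gamma+N/2-1)^2+(N-1)$; for $k=1$ it produces exactly the rational branch of \eqref{H_N}.

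Finally I would minimize over $k$: monotonicity of the rational expression in $\lambda_k$, together with the fact that $H_k\to H_0$ as $k\to\infty$ in the boundary-minimum regime, shows $\inf_{k\ge 0}H_k=\min(H_0,H_1)$; a direct comparison then gives $H_1\le H_0$ precisely when $|\gamma+N/2|\le\sqrt{N+1}$, which is the threshold in \eqref{H_N}. Sharpness and non-attainment follow by testing with cutoffs of $r^{-(\gamma+N/2-1)}Y_1(\bsig)$ in the first regime and of $r^{-(\gamma+N/2-1)}$ in the second. The hardest part will be the computational step three, i.e., organizing the weighted integration by parts so that the cross term $\int\Delta\phi\,\pd_r\phi\,|\bx|^{2\gamma-1}d\bx$ combines cleanly with the Plancherel side, together with the monotonicity check in step four that guarantees the infimum is always achieved at $k=0$ or $k=1$, with the switch occurring exactly at $|\gamma+N/2|=\sqrt{N+1}$.
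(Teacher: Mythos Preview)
Your approach is correct and essentially the same as the paper's: both carry out the Emden substitution $t=\log r$, Fourier transform in $t$, spherical-harmonic decomposition, and mode-by-mode minimization of a rational function of $\lambda^2$; the paper merely keeps the bookkeeping at the vector level (encoding curl-freeness as $\wh{\bv_{\bsig}}=(\eps+i\lambda)^{-1}\nabla_{\bsig}\wh{v_\rho}$) rather than passing to the scalar potential, which is exactly the equivalent reformulation recorded as Corollary~\ref{Corollary:Hardy-Leray}. Two small slips to fix: $\re^N\setminus\mathrm{supp}\,\bu$ need not be connected (think of an annular support), though this is harmless since you only need $\nabla\phi\in C_c^\infty$; and in fact $H_k\to\infty$, not $H_0$, as $k\to\infty$---the paper obtains $\inf_k H_k=\min(H_0,H_1)$ by showing the relevant function of $\alpha_k$ is increasing once $\alpha_k\ge N$ and then checking $H_2>H_1$ directly.
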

We remark that no symmetry assumption for $\bu$ is needed.
Theorem \ref{Theorem:Hardy-Leray} corresponds to the higher-dimensional analogue of Theorem B in the sense that $C_{2,\gamma} = H_{2,\gamma}$.

For curl-free vector fields $\bu$, Poincar\'e's lemma implies that there exists a smooth $\phi$ such that $\bu = \nabla \phi$. 
Thus by using the potential function $\phi$, Theorem \ref{Theorem:Hardy-Leray} is equivalent to the following corollary.

%
%
\begin{corollary}
\label{Corollary:Hardy-Leray}
Let $\gamma \neq 1-N/2$ be a real number and let $\phi \in C_c^{\infty}(\re^N)$.
We assume that $\nabla \phi({\bm 0}) = {\bm 0}$ if $\gamma < 1-N/2$. 
Then
\[
	H_{N,\gamma} \intRN  \frac{|\nabla \phi|^2}{|\bx|^2} |\bx|^{2\gamma} d{x} \le \intRN  |D^2 \phi|^2 |\bx|^{2\gamma} d{x}
\]
with the optimal constant $H_{N,\gamma}$ given in \eqref{H_N}.
Here $D^2 \phi(\bx) = \( \frac{\pd^2 \phi}{\pd x_i \pd x_j}(\bx) \)_{1 \le i,j \le N}$ denotes the Hessian matrix of $\phi$.
\end{corollary}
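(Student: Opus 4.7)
The plan is to deduce the corollary as a direct consequence of Theorem \ref{Theorem:Hardy-Leray} by passing between curl-free vector fields and scalar potentials via Poincaré's lemma. Concretely, I will show that, up to additive constants, the correspondence $\bu \leftrightarrow \phi$ given by $\bu = \nabla \phi$ is a bijection between the test classes in Theorem \ref{Theorem:Hardy-Leray} and in the corollary, and that it preserves both sides of the inequality.

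For the easy direction (corollary follows from Theorem \ref{Theorem:Hardy-Leray}), I would let $\phi \in C_c^\infty(\re^N)$ and set $\bu := \nabla \phi$. Then $\bu \in C_c^\infty(\re^N)^N$ is automatically curl-free, and the hypothesis $\nabla \phi(\bm 0) = \bm 0$ translates exactly into $\bu(\bm 0) = \bm 0$. The key pointwise identities
\[
|\bu|^2 = |\nabla \phi|^2, \qquad |\nabla \bu|^2 = \sum_{i,j=1}^{N}\left|\pd_i u_j\right|^2 = \sum_{i,j=1}^{N}\left|\pd_i \pd_j \phi\right|^2 = |D^2 \phi|^2
\]
then turn the inequality \eqref{Eq:Hardy-Leray} applied to $\bu$ into the inequality stated in the corollary, with the same constant $H_{N,\gamma}$.

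For the converse direction, which is what guarantees that $H_{N,\gamma}$ remains \emph{sharp} in the corollary, I would start with an arbitrary curl-free $\bu \in C_c^\infty(\re^N)^N$ and use Poincaré's lemma on the simply connected domain $\re^N$ to write $\bu = \nabla \phi$ for some $\phi \in C^\infty(\re^N)$. The only subtlety is that $\phi$ need not have compact support a priori, but since $\nabla \phi$ vanishes outside some ball $B_R$ and $\re^N \setminus B_R$ is connected (here we use $N \ge 2$), $\phi$ must be constant on that complement; subtracting this constant produces $\phi \in C_c^\infty(\re^N)$ with $\bu = \nabla \phi$, and $\nabla \phi(\bm 0) = \bm 0$ whenever $\bu(\bm 0) = \bm 0$.

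There is essentially no analytic obstacle here — the proof is purely a restatement via potentials — so the corollary follows with the identical sharp constant $H_{N,\gamma}$ from \eqref{H_N}. The only point that requires a brief justification is the compact-support issue for $\phi$, and that is handled by the connectedness remark above.
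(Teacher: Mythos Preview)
Your proposal is correct and follows exactly the approach the paper indicates: the paper simply remarks, just before stating the corollary, that by Poincar\'e's lemma any curl-free $\bu$ can be written as $\nabla\phi$, so Theorem~\ref{Theorem:Hardy-Leray} is equivalent to Corollary~\ref{Corollary:Hardy-Leray}. You have supplied the details the paper leaves implicit, including the small point about adjusting $\phi$ by an additive constant to make it compactly supported (using connectedness of the complement of a ball for $N\ge 2$); this is precisely what is needed to transfer the sharpness of $H_{N,\gamma}$ back to the scalar formulation.
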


By similar arguments, we prove the following Rellich-Leray inequality for curl-free vector fields.

%
%
\begin{theorem}
\label{Theorem:Rellich-Leray}
Let $\gamma \neq 2 - N/2$ be a real number and let $\bu \in C_c^{\infty}(\re^N)^N$ be a curl-free vector field.
We assume that $\intRN  |\bx|^{2\gamma-4}|\bu|^2 d{x} < \infty$.
Then
\begin{equation}
\label{Eq:Rellich-Leray}
	R_{N,\gamma} \intRN  \frac{|\bu|^2}{|\bx|^4} |\bx|^{2\gamma} d{x} \le \intRN  |\Delta \bu|^2 |\bx|^{2\gamma} d{x} 
\end{equation}
with the optimal constant $R_{N,\gamma}$  given by
\begin{equation}
\label{R_N}
	R_{N,\gamma} = \min_{\nu \in \N \cup \{ 0 \}} 
	\frac{(1 - \frac{N}{2} -\gamma)^2 + \al_\nu}{\( 3 - \frac{N}{2} - \gamma \)^2 + \al_\nu} (\al_{3-\frac{N}{2}-\gamma} -\al_\nu)^2
\end{equation}
for $\gamma \ne 3 - N/2$, 
where we put
\[
	\al_s = s(s+N-2), \quad s \in \re,
\]
%
%
and
\begin{equation}
\label{R_N_0}
	R_{N, 3 - N/2}= 
	\begin{cases} 
	4(N-2)^2 \quad & \text{for} \quad N=2,3,4, \\
	(N+3)(N-1) \quad & \text{for} \quad N \ge 5.
	\end{cases}
\end{equation}
\end{theorem}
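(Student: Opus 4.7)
\textbf{Proof plan for Theorem \ref{Theorem:Rellich-Leray}.}
The starting point is the same reduction that underlies Corollary~\ref{Corollary:Hardy-Leray}: since $\bu$ is curl-free on $\re^N$, Poincar\'e's lemma gives a smooth potential $\phi$ with $\bu=\nabla\phi$, and the identity $\Delta\bu=\nabla\operatorname{div}\bu-\operatorname{curl}\operatorname{curl}\bu=\nabla(\Delta\phi)$ lets me rewrite \eqref{Eq:Rellich-Leray} as a weighted $L^2$-inequality between $|\nabla\phi|/|\bx|^2$ and $|\nabla(\Delta\phi)|$. I then pass to spherical coordinates $\bx=r\omega$ and expand $\phi(r\omega)=\sum_{k\ge 0}\phi_k(r)Y_k(\omega)$ in spherical harmonics, where $-\Delta_{\Sp^{N-1}}Y_k=\al_k Y_k$. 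Using $|\nabla f|^2=(\pd_r f)^2+r^{-2}|\nabla_{\Sp^{N-1}}f|^2$ and orthonormality of $\{Y_k\}$, both integrands decouple, and $\Delta\phi$ restricted to the $k$-th sector is $\psi_k:=\phi_k''+\tfrac{N-1}{r}\phi_k'-\tfrac{\al_k}{r^2}\phi_k$. Thus \eqref{Eq:Rellich-Leray} is equivalent to the family of one-dimensional inequalities
\[
R_{N,\gamma}\int_0^\infty\!\!\Big[(\phi_k')^2+\tfrac{\al_k}{r^2}\phi_k^2\Big]r^{N+2\gamma-5}\,dr
\le \int_0^\infty\!\!\Big[(\psi_k')^2+\tfrac{\al_k}{r^2}\psi_k^2\Big]r^{N+2\gamma-1}\,dr,
\qquad k=0,1,2,\ldots,
\]
and the optimal $R_{N,\gamma}$ is the infimum over $k$ of the best constant $R_k$ in each.

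To compute $R_k$, I switch to the logarithmic variable $t=\log r$ and set $\phi_k(r)=r^s f(t)$ with the balance exponent $s=3-\tfrac{N}{2}-\gamma$; this choice kills the weight $r^{N+2\gamma-6}$ common to all four radial integrals. A direct computation using $r\pd_r(r^a h)=r^a(D+a)h$, where $D=d/dt$, gives $\phi_k'=r^{s-1}(D+s)f$ and $\psi_k=r^{s-2}\bigl[(D+s)(D+s+N-2)-\al_k\bigr]f$; writing $g:=\bigl[(D+s)(D+s+N-2)-\al_k\bigr]f$, the inequality for the $k$-th sector becomes translation-invariant:
\[
R_k\int_{\re}\!\Big[\bigl((D+s)f\bigr)^2+\al_k f^2\Big]dt
\le \int_{\re}\!\Big[\bigl((D+s-2)g\bigr)^2+\al_k g^2\Big]dt.
\]
Applying the Fourier transform in $t$ and using Parseval, this is equivalent to a pointwise inequality in $\xi\in\re$ for $|\hat f(\xi)|^2$, namely
\[
R_k\;\le\; F_k(\xi):=\frac{\bigl[(s-2)^2+\xi^2+\al_k\bigr]\bigl[(\al_s-\al_k-\xi^2)^2+(2s+N-2)^2\xi^2\bigr]}{s^2+\xi^2+\al_k},
\]
so $R_k=\inf_{\xi\in\re}F_k(\xi)$. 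Setting $\xi=0$ reproduces the expression appearing in \eqref{R_N}, and the candidate sharp constant is $\min_{k\ge 0}F_k(0)$.

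The main analytic task is therefore to show that $F_k$ is minimized at $\xi=0$ whenever $\gamma\ne 3-N/2$. Writing $F_k$ as a rational function of $x=\xi^2$ and expanding the numerator as a cubic in $x$, I plan to verify directly that $x\mapsto F_k(x)$ has non-negative derivative at $x=0$, and, more importantly, that $F_k(x)\ge F_k(0)$ on $[0,\infty)$; the latter reduces to a polynomial inequality whose sign can be read off once the leading coefficients are arranged by the identities $\al_s-\al_{s-2}=2s+N-3$ and $2s+N-2=4-2\gamma$. This step---not the separation of variables or the Fourier reduction---is the genuine obstacle, since the competition between the quadratic factors $(\al_s-\al_k-x)^2$ and $(2s+N-2)^2 x$ must be controlled for \emph{every} admissible $k$. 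Once the pointwise bound is established, sharpness follows by testing with $\phi=r^s\chi_R(\log r)Y_k(\omega)$, where $\chi_R$ is a smooth cut-off concentrating $\hat\chi_R$ at $\xi=0$; these yield a minimizing sequence for whichever $k=k^\ast$ attains $\min_k F_k(0)$, and after suitable truncation near $r=0$ and $r=\infty$ they produce genuine elements of $C_c^\infty(\re^N)^N$ satisfying the finiteness hypothesis on $\int |\bx|^{2\gamma-4}|\bu|^2$. Finally, the exceptional case $\gamma=3-N/2$ in \eqref{R_N_0} is handled separately: here $s=0$, so the denominator $s^2+\al_k$ vanishes for $k=0$. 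For $k\ge 1$ the same Fourier analysis gives $R_k=\al_k(\al_k+4)$, minimized at $k=1$ to yield $(N+3)(N-1)$; for $k=0$ one analyzes $\inf_{x\ge 0}(4+x)(x+(N-2)^2)$ directly to obtain $4(N-2)^2$, and comparing the two completes \eqref{R_N_0}.
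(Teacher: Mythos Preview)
Your plan is correct and leads to exactly the same one-parameter optimization problem as the paper, but the route is slightly more elementary. The paper works with the vector field $\bu$ itself: it applies the Brezis--V\'azquez--Maz'ya transformation $\bv=\rho^{1-\eps}\bu$, Fourier-transforms in $t=\log\rho$, and then uses the curl-free condition to write $\wh{\bv}=f\be_\rho+(\eps+i\la)^{-1}\nabla_{\bsig}f$; to evaluate $\int|\nabla_{\bsig}\wh{\bv}|^2$ and $\int|\Delta_{\bsig}\wh{\bv}|^2$ in terms of $f$ it needs the commutation identities of Lemma~\ref{Lemma:commutation} and Lemma~\ref{Lemma:integrals}. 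By passing to the scalar potential $\phi$ at the outset and using $\Delta\bu=\nabla(\Delta\phi)$, you bypass all of that vector-field bookkeeping: the spherical-harmonic decoupling is the standard scalar one, and your symbol $F_k(\xi)$ coincides with the paper's $F(\la^2,\al_k)$ (with $s=\eps$). The ``hard'' step you flag---showing $F_k(\xi)\ge F_k(0)$ for all $\xi$---is precisely the paper's verification that $\pd_\kappa F(\kappa,\al)\ge 0$, and your test functions $\phi=r^s\chi_R(\log r)Y_{k^\ast}$ are exactly the potentials of the paper's $\bu_n$. Two small remarks: your identity $\al_s-\al_{s-2}=2s+N-3$ is miscopied (it should be $4s+2N-8$), and you should also note that the minimum over $k$ is \emph{attained}, which the paper secures via Lemma~\ref{Lemma:A2}; neither affects the soundness of the plan.
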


%
%
\begin{corollary}
\label{Corollary:Rellich-Leray}
Let $\gamma \neq 2 - N/2$ be a real number and let $\phi \in C_c^{\infty}(\re^N)$ be a potential function such that
$\intRN  |\bx|^{2\gamma-4}|\nabla \phi|^2 d{x} < \infty$.
Then
\[
	R_{N,\gamma} \intRN \frac{|\nabla \phi|^2}{|\bx|^4} |\bx|^{2\gamma} d{x} \le \intRN |\nabla \Delta \phi|^2 |\bx|^{2\gamma} d{x} 
\]
holds with the optimal constant $R_{N,\gamma}$ as in \eqref{R_N} and \eqref{R_N_0}.
\end{corollary}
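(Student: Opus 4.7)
The plan is to deduce Corollary \ref{Corollary:Rellich-Leray} directly from Theorem \ref{Theorem:Rellich-Leray} via the potential-field correspondence, observing that both the differential identity and the optimality transfer automatically.

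First I would set $\bu := \nabla\phi$ for $\phi \in C_c^\infty(\re^N)$. Then $\bu$ is a smooth, compactly supported, curl-free vector field. Since partial derivatives commute, one has $\Delta\bu = \Delta(\nabla\phi) = \nabla(\Delta\phi)$ pointwise, so $|\Delta\bu|^2 = |\nabla\Delta\phi|^2$, and of course $|\bu|^2 = |\nabla\phi|^2$. The hypothesis $\intRN |\bx|^{2\gamma-4}|\nabla\phi|^2 d{x} < \infty$ is exactly the hypothesis $\intRN |\bx|^{2\gamma-4}|\bu|^2 d{x} < \infty$ required in Theorem \ref{Theorem:Rellich-Leray}. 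Applying that theorem to $\bu$ therefore gives the desired inequality in Corollary \ref{Corollary:Rellich-Leray} with the same constant $R_{N,\gamma}$ defined by \eqref{R_N} and \eqref{R_N_0}.

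For the sharpness of $R_{N,\gamma}$ in the corollary, I would run the correspondence in reverse. Given any curl-free $\bu \in C_c^\infty(\re^N)^N$, Poincar\'e's lemma on the simply connected domain $\re^N$ ($N\ge2$) yields $\phi \in C^\infty(\re^N)$ with $\nabla\phi = \bu$. Because $\bu$ has compact support and the complement of a compact set in $\re^N$ is connected (for $N \ge 2$), $\phi$ is constant outside some large ball; subtracting that constant, we may assume $\phi \in C_c^\infty(\re^N)$. Thus any improvement of the constant in the corollary would propagate, via this $\phi \mapsto \nabla\phi$ correspondence, to an improvement of the constant in Theorem \ref{Theorem:Rellich-Leray}, which is impossible.

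Since the proof is a direct transcription through the identification $\bu \leftrightarrow \nabla\phi$, I do not anticipate any technical obstacle: the only point requiring a moment's care is the verification that the integrability condition and the function class $C_c^\infty$ are preserved in both directions, which is handled by the Poincar\'e-lemma argument above.
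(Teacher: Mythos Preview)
Your argument is correct and is precisely the route the paper has in mind: the corollary is stated without a separate proof because it is obtained from Theorem~\ref{Theorem:Rellich-Leray} by the potential--field identification $\bu=\nabla\phi$ (and conversely via Poincar\'e's lemma), exactly as the paper spells out for the Hardy-Leray case just before Corollary~\ref{Corollary:Hardy-Leray}. Your check that $\phi$ may be taken in $C_c^\infty(\re^N)$ after subtracting a constant, using connectedness of the complement of a compact set for $N\ge 2$, is the only point needing comment and you handled it correctly.
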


\begin{remark}
We do not know that the optimal constants $H_{N, \gamma}$ and $R_{N, \gamma}$ are attained or not
in the class of vector fields in Theorem \ref{Theorem:Hardy-Leray} and Theorem \ref{Theorem:Rellich-Leray}.

Also in Theorem \ref{Theorem:Hardy-Leray} and Theorem \ref{Theorem:Rellich-Leray}, if we restrict ourselves only on vector fields in $C_c^{\infty}(\re^N \setminus \{{\bm 0}\})$, then
the additional assumptions $\bu({\bm 0}) = {\bm 0}$ if $\gamma < 1-N/2$, or $\intRN  |\bx|^{2\gamma-4}|\bu|^2 d{x} < \infty$ are not needed
and the same conclusions hold true.
\end{remark}

Concerning Corollary \ref{Corollary:Hardy-Leray} which is equivalent to Theorem \ref{Theorem:Hardy-Leray},
we should remark that the similar results already exist by \cite{T-Z}, \cite{Ghoussoub-Moradifam}; see also \cite{Ghoussoub-Moradifam(book)} Chapter 6.5. 
More precisely,
improving the work by Tertikas and Zographopoulos \cite{T-Z}, 
Ghoussoub and Moradifam (\cite{Ghoussoub-Moradifam}: Appendix B) prove the following:
Let $C_c^{\infty}(B_R)$ denote the set of smooth functions having compact supports in a ball $B_R \subset \re^N$ with radius $R$. 
Define
\[
	A_{N, \gamma}(R) = \inf \left\{ \dfrac{\int_{B_R} |\Delta \phi|^2 |\bx|^{2\gamma} d{x}}{\int_{B_R} \frac{|\nabla \phi|^2}{|\bx|^2} |\bx|^{2\gamma} d{x}} \, : \, \phi \in C_c^{\infty}(B_R) \right\}.
\]
Assume $\gamma \ge 1 - N/2$.
Then $A_{N, \gamma}(R)$ is independent of $R$, and is equal to
\[
	A_{N, \gamma} = \min_{\nu \in \N \cup \{ 0 \}} \left\{ \dfrac{\( \frac{(N-4+2\gamma)(N-2\gamma)}{4} + \al_\nu \)^2}{\( \frac{N-4+2\gamma}{2}\)^2 + \al_\nu} \right \},
\]
where $\al_{\nu} = \nu(N + \nu - 2)$ $(\nu \in \N \cup \{ 0 \})$ is the $\nu$-th eigenvalue of the Laplace-Beltrami operator on the unit sphere $\Sp^{N-1}$ in $\re^N$.
Note that by a simple formula
\[
	|D^2 \phi|^2 = \sum_{i,j=1}^N \(\frac{\pd^2 \phi}{\pd x_i \pd x_j} \)^2 
	= {\rm div} \, \( \frac{1}{2} \nabla |\nabla \phi|^2 - (\Delta \phi) \nabla \phi \) + (\Delta \phi)^2,
\]
for $\phi \in C_c^{\infty}(B_R)$, 
we have $\int_{B_R} |D^2 \phi|^2 d{x} = \int_{B_R} |\Delta \phi|^2 d{x}$ which implies $H_{N, 0} = A_{N, 0}$. 
However, in weighted cases, it holds $\int_{B_R} |D^2 \phi|^2 |\bx|^{2\gamma} d{x} \ne \int_{B_R} |\Delta \phi|^2 |\bx|^{2\gamma} d{x}$, 
and in general we have $H_{N, \gamma} \ne A_{N, \gamma}$.
Also the inequality in Corollary \ref{Corollary:Rellich-Leray} seems new.

The organization of this paper is as follows:
In \S 2, we recall the method by Costin-Maz'ya in \cite{Costin-Mazya} and derive the equivalent curl-free condition in polar coordinates.
In \S 3, we prove Theorem \ref{Theorem:Hardy-Leray} and the sharpness of the constant \eqref{H_N}.
In \S 4, we prove Theorem \ref{Theorem:Rellich-Leray} and the sharpness of the constants \eqref{R_N} and \eqref{R_N_0}.
Since the test vector fields introduced in \cite{Costin-Mazya} may not have compact supports, 
we will use different test vector fields for the proof of the sharpness of the constants.

%
%
\section{Preparation: Costin-Maz'ya's setting}

In this section, we recall the method of Costin-Maz'ya \cite{Costin-Mazya} and derive the polar coordinate representation of the curl-free condition.

\vspace{1em}\noindent
\subsection*{Spherical polar coordinate}

We introduce the spherical polar coordinates
\[
	(\rho,\th_1,\th_2,\cdots,\th_{N-2,}\th_{N-1}) \in (0,\infty) \times (0,\pi)^{N-2} \times [0,2\pi)
\]
whose relation to the standard Euclidean coordinates $\bx=(x_1,\cdots,x_N)\in\mathbb{R}^N$ is given by
\[
	\bx = \rho (\cos\th_1, \pi_1 \cos\th_2, \pi_2 \cos\th_3, \cdots, \pi_{k-1} \cos\th_k, \cdots, \pi_{N-2} \cos\th_{N-1}, \pi_{N-1}),
\]
hereafter we use the notation
\[
\pi_0 = 1, \quad  \pi_k = \prod_{j=1}^k \sin \th_j, \quad (k=1,2, \cdots, N-1)
\]
for simplicity. 
Also we use the notation
\[
	\pd_\rho = \frac{\pd}{\pd\rho}, 
	\quad \pd_{\th_k}=\frac{\pd }{\pd{\th_k}}, \quad (k=1,2, \cdots, N-1)
\]
for the partial derivatives, and 
\begin{align*}
	d{x} =\prod_{k=1}^{N}dx_k=dx_1dx_2\cdots dx_N, \quad
	d\sigma =\prod_{k=1}^{N-1} (\sin\th_k)^{N-k-1}d\th_k
\end{align*}
for the volume elements on $\re^N$ and $\Sp^{N-1}$.

The orthonormal basis vector fields $\be_\rho, \be_{\th_1}, \be_{\th_2}, \cdots, \be_{\th_{N-1}}$ along the polar coordinates are given by
\begin{equation}
\label{ONB}
	\begin{cases}
	&\be_\rho = \frac{\pd_\rho \bx}{|\pd_\rho \bx|}  
	=\(\cos\th_1, \pi_1 \cos\th_2, \pi_2 \cos\th_3, \cdots \pi_{N-2} \cos\th_{N-1}, \pi_{N-1} \),\vspace{0.2em} \\
	&\be_{\th_k} = \frac{\pd_{\th_k} \bx}{|\pd_{\th_k} \bx|} 
	= \dfrac{1}{\pi_{k-1}\hspace{-1em}}\,\pd_{\th_k}\be_{\rho},  \quad (k=1,2,\cdots,N-1)
	\end{cases}
\end{equation}
that are clearly independent of the variable $\rho$.
Note that we can rewrite
\begin{align*}
	&\be_\rho = (\cos\th_1, \,\pi_1 \cos\th_2\,, \,\pi_2 \cos\th_3\,, \cdots ,\pi_{k-1} \cos\th_k\,, \pi_k \bphi_k)\ , \\
	&\be_{\th_k} = \big(\underbrace{\strut 0,0,\cdots,0}_{k-1}, -\sin\th_k,\, \cos\th_k \bphi_k\big)\ ,
\end{align*}
where
\begin{align*}
	\bphi_k = \(\cos\th_{k+1}, \frac{\pi_{k+1}}{\pi_k} \cos\th_{k+2}, \frac{\pi_{k+2}}{\pi_k} \cos\th_{k+3}, \cdots, \frac{\pi_{N-2}}{\pi_k} \cos\th_{N-1}, \frac{\pi_{N-1}}{\pi_k}\) \in \Sp^{N-k-1}
\end{align*}
is a $(N-k)$-vector, which depends only on $\th_{k+1}, \cdots, \th_{N-1}$.
From these expressions, we can easily check the orthonormality of $\be_\rho,\be_{\th_1},\be_{\th_2},\cdots,\be_{\th_{N-1}}$.

For any smooth vector field $\bu = (u_1,u_2,\cdots,u_N):\re^N \to \re^N$,
its polar components $u_\rho$, $u_{\th_1}$, $u_{\th_2}$, $\cdots$, $u_{\th_{N-1}}$ as $\re$-valued smooth functions are defined by
\[
	 \bu = u_\rho \be_\rho + \sum_{k=1}^{N-1} u_{\th_k}\be_{\th_k}\ .
\]
The second term of the right-hand side is denoted as
\[
	 \bu_{\bsig}=\sum_{k=1}^{N-1}u_{\th_k}\be_{\th_k}
\]
and we call this the spherical component of $\bu$.   
Thus we have the polar decomposition of $\bu$:
\begin{equation}
\label{polar-components}
	\bu =u_\rho\be_\rho + \bu_{\bsig}
\end{equation}
which gives the decomposition of $\bu$ into the radial and the spherical parts.
Also by using the chain rules together with \eqref{ONB}, we have 
\[
	 \pd_\rho 
	=\be_\rho \cdot \nabla, \quad \ {\rm and}\ \quad \frac{1\,}{\rho\,}\hspace{0.1em} \pd_{\th_k} 
	=\pi_{k-1} \be_{\th_k}\cdot \nabla, \quad(k=1,\cdots,N-1),
\]
which give the polar decomposition of the gradient operator $\nabla$:
\begin{equation}
\label{polar-nabla}
	 \nabla =  \be_\rho \pd_\rho + \frac{1\,}{\rho\,}\nabla_{\bsig}\ ,
\end{equation}
where
\begin{equation}
\label{spherical_gradient}
	 \nabla_{\bsig}= \sum_{k=1}^{N-1}\frac{\,\be_{\th_k}}{\pi_{k-1}\hspace{-0.7em}}\,\pd_{\th_k}
\end{equation}
is the gradient operator on $\Sp^{N-1}$.

Moreover, it is well-known that the polar representation of the Laplace operator $\Delta = \sum_{k=1}^N \pd^2/\pd x_k^2$ is given by
\begin{equation}
 \label{polar-Delta}
	 \Delta = \frac{1}{\rho^{N-1}} \pd_\rho \( \rho^{N-1} \pd_\rho \) + \frac{1}{\rho^2} \Delta_{\bsig} \ ,
\end{equation}
where
\begin{equation}
 \label{spherical_Laplacian}
	\Delta_{\bsig} = \sum_{k=1}^{N-1} \frac{(\sin\th_k)^{\strut k+1-N}\hspace{-2.7em}}{\pi_{k-1}^2}\ \pd_{\th_k} \((\sin\th_k)^{N-k-1} \pd_{\th_k}\)
	= \sum_{k=1}^{N-1} \frac{1}{\pi_{k-1}^2\hspace{-0.7em}} \,D_{\th_k} \pd_{\th_k}
\end{equation}
is the Laplace-Beltrami operator on $\Sp^{N-1}$ and for every $k=1,\cdots,N-1$
\[
	D_{\th_k} = \pd_{\th_k} + (N-k-1) \cot \th_k
\] 
is the adjoint operator of $-\pd_{\th_k}$ in $L^2(d\sigma, \Sp^{N-1})$ : it holds that
\[
	-\int_{\Sp^{N-1}} f \(\pd_{\th_k} g\) d\sigma = \int_{\Sp^{N-1}} \(D_{\th_k} f\) g d\sigma
\]
for any smooth functions $f, g$ on $\Sp^{N-1}$.

We also introduce the deformed radial coordinate $t \in \re$ by the Emden transformation
\begin{equation}
\label{Emden}
	t = \log \rho.
\end{equation}
Note that \eqref{Emden} leads to the transformation law of the differential operators $\rho \pd_\rho = \pd_t$.
By this transformation, it is easy to check that the polar decomposition of $\nabla$ , $\Delta$ in \eqref{polar-nabla} , \eqref{polar-Delta} are also given by
\begin{align}
\label{Emden-nabla}
	&\rho\nabla={\bm e}_\rho \partial_t+\nabla_{\bsig}, \\
\label{Emden-Delta}
	&\rho^2 \Delta = \pd_t^2 + (N-2) \pd_t + \Delta_{\bsig}.
\end{align}
For the later use, we prove the following lemma.

%
%
\begin{lemma}
\label{Lemma:commutation}
Let $\nabla_{\bsig}$ and $\Delta_{\bsig}$ are defined by \eqref{spherical_gradient} and \eqref{spherical_Laplacian} respectively.
Then for any $f \in C^{\infty}(\mathbb{S}^{N-1})$ , \ $\bsig=\be_\rho\in\mathbb{S}^{N-1}$ and $\al \in \C$, there hold that
\begin{align}
\label{commutation1}
	\Delta_{\bsig} (\be_{\rho} f) - \be_{\rho} \Delta_{\bsig} f &= \big( 2 \nabla_{\bsig} - (N-1) \be_{\rho} \big) f, \\
\label{commutation2}
	\Delta_{\bsig} \nabla_{\bsig} f - \nabla_{\bsig} \Delta_{\bsig} f &= \big( (N-3) \nabla_{\bsig} - 2 \be_{\rho} \Delta_{\bsig} \big) f, \\
\label{commutation3}
	\Delta_{\bsig} \(f {\bm e}_\rho + \al \nabla_{\bsig}f \) &= {\bm e}_\rho \big((1-2\al) \Delta_{\bsig}f - (N-1) f \big) \\
	&\quad  + \big( 2 + (N-3)\al \big) \nabla_{\bsig}f + \al \nabla_{\bsig} \Delta_{\bsig}f. \notag
\end{align}
\end{lemma}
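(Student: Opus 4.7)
My plan is to establish \eqref{commutation1} and \eqref{commutation2} by direct componentwise computation in the ambient Cartesian basis, and then to obtain \eqref{commutation3} as a linear combination of the first two.

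For \eqref{commutation1}, I would read off the $j$-th Cartesian component. Writing $\be_\rho|_{\Sp^{N-1}} = (\sig_1, \ldots, \sig_N)$ where $\sig_j$ denotes the restriction to $\Sp^{N-1}$ of $x_j/|\bx|$, the statement reduces to $\Delta_{\bsig}(\sig_j f) - \sig_j \Delta_{\bsig} f = 2(\nabla_{\bsig} f)_j - (N-1)\sig_j f$. The Leibniz rule $\Delta_{\bsig}(gh) = g\,\Delta_{\bsig} h + 2\,\nabla_{\bsig} g \cdot \nabla_{\bsig} h + h\,\Delta_{\bsig} g$ for the Laplace--Beltrami operator then leaves two standard facts to verify: $\sig_j$ is a degree-one spherical harmonic, so $\Delta_{\bsig} \sig_j = -(N-1)\sig_j$; and $\nabla_{\bsig} \sig_j = \be_j - \sig_j \be_\rho$, which follows by extending $\sig_j$ to the homogeneous-of-degree-zero function $x_j/|\bx|$ on $\re^N\setminus\{{\bm 0}\}$ and invoking \eqref{polar-nabla}. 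Since $\nabla_{\bsig} f$ is tangent to $\Sp^{N-1}$, the contraction $\be_\rho \cdot \nabla_{\bsig} f$ vanishes; collecting the $j$-components into a vector then gives \eqref{commutation1}.

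For \eqref{commutation2}, I would extend $f$ from $\Sp^{N-1}$ to $F(\bx) = f(\bx/|\bx|)$ on $\re^N\setminus\{{\bm 0}\}$, which is homogeneous of degree $0$. Then $(\nabla_{\bsig} f)_j = \pd_j F$ on $\Sp^{N-1}$ and $\Delta F = \rho^{-2} G$, where $G$ is the degree-zero radial extension of $\Delta_{\bsig} f$. Because the Euclidean Laplacian commutes with $\pd_j$, one finds $\rho^2\Delta(\pd_j F) = -2\rho^{-2} x_j G + \pd_j G$, whose restriction to $\Sp^{N-1}$ is $-2\sig_j\Delta_{\bsig} f + (\nabla_{\bsig}\Delta_{\bsig} f)_j$. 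On the other hand, $\pd_j F$ is homogeneous of degree $-1$, so rearranging \eqref{Emden-Delta} as $\Delta_{\bsig} = \rho^2\Delta - (\rho\pd_\rho)^2 - (N-2)\rho\pd_\rho$ and substituting $\rho\pd_\rho(\pd_j F) = -\pd_j F$ yields $\Delta_{\bsig}(\pd_j F) = \rho^2\Delta(\pd_j F) + (N-3)\pd_j F$. Assembling the $j$-components into a vector produces \eqref{commutation2}. Identity \eqref{commutation3} then follows by inserting \eqref{commutation1} into $\Delta_{\bsig}(f\be_\rho)$ and \eqref{commutation2} into $\al\,\Delta_{\bsig}\nabla_{\bsig} f$ and grouping the $\be_\rho$-parallel and tangential contributions.

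I do not anticipate any serious obstacle; everything reduces to elementary differential calculus. The only subtlety worth emphasizing is that, in this lemma, $\Delta_{\bsig}$ acts on a vector-valued object componentwise in the Cartesian basis, which is not the same as the intrinsic rough Laplacian on $\Sp^{N-1}$. The radial-extension device cleanly converts the ambient $\Delta$-computation into spherical data, and the degree-$(-1)$ homogeneity of $\pd_j F$ is precisely what generates the $(N-3)$ coefficient in \eqref{commutation2}.
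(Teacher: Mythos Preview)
Your proposal is correct; the identities follow exactly as you outline, and \eqref{commutation3} is indeed just a linear combination of the first two.

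The paper's own proof is built on the same underlying idea---extend $f$ radially to a degree-zero homogeneous function and exploit $\Delta_{\bsig}=\rho^2\Delta$, $\nabla_{\bsig}=\rho\nabla$ on such functions---but organizes the computation differently. Rather than working componentwise in Cartesian coordinates, it keeps everything vector-valued: for \eqref{commutation1} it writes $\be_\rho f=\bx f/\rho$ and expands $\rho^2\Delta(\bx f/\rho)$ using the product rule for the ambient Laplacian together with $\Delta\rho^{-1}=-(N-3)\rho^{-3}$; for \eqref{commutation2} it writes $\Delta_{\bsig}\nabla_{\bsig}f-\nabla_{\bsig}\Delta_{\bsig}f=\rho^2\Delta(\rho\nabla f)-\rho\nabla(\rho^2\Delta f)$ and uses $\Delta\nabla=\nabla\Delta$ and $\Delta\rho=(N-1)\rho^{-1}$. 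Your route has the virtue of making the ``componentwise Laplacian'' interpretation explicit and of isolating the scalar facts ($\Delta_{\bsig}\sig_j=-(N-1)\sig_j$, $\nabla_{\bsig}\sig_j=\be_j-\sig_j\be_\rho$, homogeneity of $\pd_jF$) that drive the coefficients; the paper's route is a touch shorter because it avoids fixing a component and uses the product rule for $\Delta$ once rather than the Leibniz rule for $\Delta_{\bsig}$ plus two auxiliary identities. Either way the content is the same.
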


\begin{proof}
 Take any $f \in C^{\infty}(\mathbb{S}^{N-1})$. We identify $f$ and the function $\widetilde{f}\in C^\infty(\mathbb{R}^N\backslash\{\bm 0\})$ defined by $\widetilde{f}(\bx)=f(\bx/|\bx|)$ . Since $f=\widetilde{f}$ does not depend on the radial variable $\rho$, 
we have $\nabla_{\bsig}f=\rho\nabla f$  by \eqref{polar-nabla} and $\Delta_{\bsig}f=\rho^2\Delta f$ by \eqref{polar-Delta}.  Thus we compute
\begin{align*}
	\Delta_{\bsig}(\be_\rho f) - \be_\rho \Delta_{\bsig}f 
 &= \rho^2 \Delta \( \frac{\bx f}{\rho} \) - \frac{\bx\hspace{0.1em}}{\rho}\hspace{0.1em} \rho^2 \Delta f \\
	&= \rho^2 \( \frac{\Delta(\bx f)}{\rho}+2\(\(\nabla \rho^{-1}\) \cdot \nabla \)(\bx f)+(\Delta\rho^{-1})\bx f \)-\rho \bx \Delta f \\
	&= 2\rho(\nabla f\cdot \nabla)\bx - 2\(\nabla \rho \cdot \nabla\)(\bx f)+\rho^3 (\Delta \rho^{-1}) \be_\rho f \\
	& =2\rho \nabla f-2\pd_\rho(\rho \be_\rho f) - (N-3) \be_\rho f \\
	& = \big(2\nabla_{\bsig}-(N-1) \be_\rho \big)f, 
\end{align*}
here we have used $\nabla \rho \cdot \nabla = \pd_{\rho}$ and $\Delta \rho^{-1} = -(N-3) \rho^{-3}$.
This proves \eqref{commutation1}.
 Similarly, also noting the commutativity $\Delta\nabla=\nabla\Delta$ and using $\Delta\rho=(N-1)\rho^{-1}$, we have
\begin{align*}
	(\Delta_{\bsig} \nabla_{\bsig} - \nabla_{\bsig} \Delta_{\bsig} )f
 &= \rho^{2}\Delta\nabla_{\bsig} f - \rho\nabla \Delta_{\bsig} f
\\&=\rho^2\Delta\big(\rho\nabla f\big)-\rho\nabla\big(\rho^2\Delta f\big)
 \\&=\rho^2\big((\Delta\rho)\nabla f+2(\nabla\rho\cdot\nabla)\nabla f\big)-\rho\big(\nabla\rho^2\big)\Delta f
 \\&=(N-1)\rho\nabla f+2\rho^2\partial_\rho\rho^{-1} \nabla_{\bsig}f-2\rho^2 \be_\rho \Delta f
\\&= (N-3) \nabla_{\bsig}f - 2\be_\rho \Delta_{\bsig}f \ .
\end{align*}
This proves \eqref{commutation2}.
Finally, by \eqref{commutation1} and \eqref{commutation2}, we see
\begin{align*}
	\Delta_{\bsig} \(f {\bm e}_\rho + \al \nabla_{\bsig}f \) &= \Delta_{\bsig}({\bm e}_\rho f ) + \al \Delta_{\bsig} \nabla_{\bsig}f \\
	&= \({\bm e}_\rho \Delta_{\bsig} + 2\nabla_{\bsig} - (N-1){\bm e}_\rho \) f + \al \( \nabla_{\bsig} \Delta_{\bsig} + (N-3) \nabla_{\bsig} - 2 {\bm e}_\rho \Delta_{\bsig} \) f \\
	&= {\bm e}_\rho \((1-2\al) \Delta_{\bsig}f - (N-1) f \) + \( 2 + (N-3)\al \) \nabla_{\bsig}f + \al \nabla_{\bsig} \Delta_{\bsig}f.
\end{align*}
This proves \eqref{commutation3}.
\end{proof}

%
%
\vspace{1em}\noindent
\subsection*{Representing the curl-free condition in polar coordinates}

In the following, let ``$\cdot$" denote the standard inner product in $\re^N$,
``$\wedge$" the wedge product for differential forms and ``$d$" the exterior derivative operator.
For a vector field ${\bm a} = (a_1, a_2, \cdots, a_N): \re^N \to \re^N$, we define the vector valued 1-form 
$d{\bm a} = (da_1, da_2, \cdots, da_N)$.
If $\bu = (u_1, u_2, \cdots, u_N)$ is a vector field, 
then $\bu \cdot d{\bm a}$ denotes the 1-form $\sum_{k=1}^N u_k da_k$.
Now, for any $C^\infty$ vector field $\bu: \re^N \to \re^N$ with variable ${\bm x}=(x_1,\cdots,x_N)$, 
we define its curl as the differential $2$-form
\[
	{\rm curl}\,\bu = d(\bu \cdot d\bx).
\] 
This can be expressed in terms of the standard Euclidean coordinates, according to the usual manipulations for the exterior derivative $d$ and the wedge product $\wedge$ :
\[
	d(\bu \cdot d\bx) = \sum_{k=1}^N du_k \wedge dx_k
	=\sum_{k=1}^{N}\sum_{j=1}^N \frac{\partial u_k}{\partial x_j}dx_j\wedge dx_k=\underset{j<k}{\sum\sum}\(\frac{\partial u_k}{\partial x_j}-\frac{\partial u_j}{\partial x_k}\)dx_j\wedge dx_k  .
\]
As well as the standard representation, we want to find a representation of $d(\bu\cdot d\bx)$ in terms of the polar coordinates $(\rho,\theta_1,\cdots,\theta_{N-1})$.
For this purpose, first we differentiate the unit vector field $\be_\rho$ given by \eqref{ONB} and expand it in the spherical-coordinate basis:
\[ 
	d \be_\rho = \sum_{k=1}^{N-1}\frac{\partial {\bm e}_\rho}{\partial \theta_k}d\theta_k=\sum_{k-1}^{N-1}  \be_{\th_k}\pi_{k-1} d\th_k\ .
\]
Then, taking the inner product with the vector field ${\bm u}=u_\rho {\bm e}_\rho+\sum_{k=1}^{N-1}u_{\theta_k}{\bm e}_{\theta_k}$ 
and also taking its exterior derivative,
we see that
\[
\begin{split}
	{\bm u}\cdot d\be_\rho &=\sum_{k=1}^{N-1}u_{\theta_k}\pi_{k-1}d\theta_k\ , \\  
	d({\bm u}\cdot d {\bm e}_\rho)&= d\rho\wedge\sum_{k=1}^{N-1}(\partial_\rho u_{\theta_k})\pi_{k-1}d\theta_k+\sum_{j=1}^{N-1}\sum_{k=1}^{N-1}\partial_{\theta_j}(\pi_{k-1}u_{\theta_k})d\theta_j\wedge d\theta_k\ .
\end{split}
\]
Also we have
\[
	\bu \cdot d {\bm x} ={\bm u}\cdot \Big((d\rho) {\bm e}_\rho+\rho \hspace{0.1em}d {\bm e}_\rho\Big) = u_\rho d\rho+\rho {\bm u}\cdot d {\bm e}_\rho.
\]
From these relations, we obtain the polar representation of the curl of $\bu$:
\[
\begin{split} 
	d({\bm u}\cdot d {\bm x}) 
	&=d(u_\rho d\rho+\rho {\bm u}\cdot d {\bm e}_\rho) \\
	&= du_{\rho} \wedge d\rho + d\rho \wedge (\bu \cdot d\be_{\rho}) + \rho d(\bu \cdot d\be_{\rho})
	\\&=d\rho\wedge\Big(-du_\rho+\sum_{k}u_{\theta_k}\pi_{k-1}d\theta_k+\sum_{k}\rho\partial_\rho u_{\theta_k}\pi_{k-1}d\theta_k\Big)
	\\&\quad+\rho\sum_{j}\sum_{k}\partial_{\theta_j}\big(\pi_{k-1}u_{\theta_k}\big)d\theta_j\wedge d\theta_k
	\\&=d\rho\wedge\sum_{k}\Big(\pi_{k-1}\partial_\rho(\rho u_{\theta_k})-\partial_{\theta_k}u_\rho\Big) d\theta_k
	\\&\quad +\rho\underset{j<k}{\sum\!\sum}\Big(\partial_{\theta_j}\big(\pi_{k-1}u_{\theta_k}\big)-\partial_{\theta_k}\big(\pi_{j-1}u_{\theta_j}\big)\Big)d\theta_j\wedge d\theta_k\ .
\end{split}
\]
Therefore, the curl-free condition $d(\bu \cdot \bx)=0$ for the vector field $\bu$ is represented  by 
\begin{equation}
\label{curl-free(polar)}
	\begin{cases}
	&\pd_\rho\big(\rho\hspace{0.05em} \pi_{k-1}u_{\th_k}\big)=\pd_{\th_k} u_\rho\   \\
	&\partial_{\theta_j}\big(\pi_{k-1}u_{\theta_k}\big)=\partial_{\theta_k}\big(\pi_{j-1}u_{\theta_j}\big)\ 
	\end{cases}
	\ , \quad \big(j, k = 1, 2, \cdots, N-1 \ \big).
\end{equation}
We claim that the second relations in \eqref{curl-free(polar)} are actually the consequences of the first.
Indeed, by integrating the first equation in \eqref{curl-free(polar)} on any interval $(0,r]\subset\mathbb{R}$ with respect to the variable $\rho$, 
we have $r\pi_{k-1} u_{\theta_k}=\partial_{\theta_k}\int_{0}^{r}u_{\rho} d{\rho}$ for every $k$. 
Thus the function $\phi\in C^\infty\big(\mathbb{R}^N\backslash\{\bm 0\}\big)$ defined by $\phi(\bx)=\frac{1}{|\bx|}\int_{0}^{|\bx|}u_\rho\big(\rho\bx/|\bx|\big) d\rho$ satisfies 
$\pi_{k-1}u_{\theta_k}=\partial_{\theta_k}\phi$ for all $k$. 
Then the second relation in \eqref{curl-free(polar)} is the same as $\partial_{\theta_j}\partial_{\theta_k}\phi=\partial_{\theta_k}\partial_{\theta_j}\phi$, which holds trivially.
This proves the claim.

Consequently we have proved that a vector field ${\bm u}\in C^\infty(\mathbb{R}^N)^N$ is curl-free if and only if
\[ 
	\partial_{\rho}(\rho u_{\theta_k})=\frac{1}{\pi_{k-1}\hspace{-1em}}\ \partial_{\theta_k}u_\rho \ , \qquad k=1,\cdots,N-1\ .
\]
That is, using vector notation as in \eqref{polar-components} and \eqref{polar-nabla}, we have
\begin{equation}
\label{curl-free}
	\partial_{\rho}(\rho \bu_{\bsig}) = \nabla_{\bsig}u_\rho\ \quad (\rho,\bsig) \in\mathbb{R}_+\times\mathbb{S}^{N-1}.
\end{equation}
In what follows we also call \eqref{curl-free} the curl-free condition for $\bu$.

\vspace{1em}\noindent
\subsection*{Brezis-Vazquez, Maz'ya transformation}

Let $\eps \ne 1$ be a real number.
As in \cite{Brezis-Vazquez}, \cite{Mazya}, 
we introduce a new vector field $\bv$ by the formula
\begin{equation}
\label{BV}
	  \bv(\bx) = \rho^{1-\eps} \bu(\bx).
\end{equation}
Then the curl-free condition \eqref{curl-free} is transformed into
\[
	\nabla_{\bsig}\big(\rho^{\varepsilon-1}v_\rho\big)=\partial_\rho(\rho^\varepsilon {\bm v}_{\bsig})\ ,
\]
that is,
\begin{equation}
\label{curl-free-v-rho}
	(\varepsilon+\rho\partial_\rho){\bm v}_{\bsig}=\nabla_{\bsig}v_\rho\ .
\end{equation}

\vspace{1em}\noindent
\subsection*{Fourier transformation in radial direction}

In the following, let us denote $\bx = \rho \bsig$ and we abbreviate $\bv(t, \bsig) = \bv(e^t \bsig)$ for a vector field $\bv(\bx) = \bv(\rho \bsig)$,
where $t=\log\rho$ is the Emden transformation given in \eqref{Emden}.
As in \cite{Costin-Mazya}, we apply the one-dimensional Fourier transformation
\[
	\bv(t,{\bsig}) \mapsto \wh{\bv}(\la,{\bsig}) = \frac{1}{\sqrt{2\pi}} \int_{\re} e^{-i\lambda t} \bv(t,{\bsig})dt
\]
with respect to the variable $t$.
By the transformation law between the derivative operators
\[
	\rho \pd_{\rho} = \pd_t\ \mapsto\  \wh{\pd_t} = i\la\, \cdot\ ,
\]
the curl-free condition \eqref{curl-free-v-rho} is changed into the equation
\begin{align*}
\label{curl-free-vhat(component)}
	(\varepsilon+i\lambda)\widehat{\bv_{\bsig}}=\nabla_{\bsig}\widehat{\hspace{0.1em} v_\rho}\ ,
\end{align*}
that is,
\[
	\widehat{\bv_{\bsig}}=\frac{1}{\varepsilon+i\lambda}\nabla_{\bsig}f\qquad  \text{where} \quad f=\widehat{\hspace{0.1em}v_\rho}\ .
\]
Thus we see that $\wh{\bv}_{\bsig}$ is expressed by the spherical gradient of some function $f=\wh{\hspace{0.1em}v_\rho}$.
In this sense, we may consider $f$ as a kind of scalar potential of $\widehat{\bv}$, 
corresponding to the fact that the curl-free vector field ${\bm u}$ has a scalar potential. 

Now we have proved the following proposition:

%
%
\begin{proposition}
\label{Proposition:curl-free}
Let $\eps \ne 1$ and let $\bu$ be a smooth vector field on $\re^N$. 
Then $\bu$ is curl-free if and only if its Brezis-V\'azquez, Maz'ya transformation $\bv(t, \bsig) = e^{t(1-\eps)} \bu(e^t \bsig)$ satisfies
\begin{equation}
\label{curl-free-v(component)}
	(\varepsilon+\partial_t){\bm v}_{\bsig}=\nabla_{\bsig}v_\rho\ .
\end{equation}
In particular, if $\bu$ is curl-free and has a compact support on $\mathbb{R}^N$, then the Fourier transformation of $\bv$ satisfies
\begin{equation}
\label{curl-free-vhat2}
	\wh{\bv}(\la, \bsig) = f \be_{\rho} +  \frac{1}{\eps + i\lambda} \nabla_{\bsig} f
\end{equation}
for some complex-valued scalar function $f = f(\la, \bsig) \in C^{\infty}(\mathbb{R}\times\mathbb{S}^{N-1})$.
\end{proposition}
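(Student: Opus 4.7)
The plan is to assemble what has already been established in the subsections above. The statement essentially packages three ingredients: the polar form of the curl-free condition, the effect of the Brezis--Vázquez/Maz'ya substitution, and the Fourier transform in the Emden variable.

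First, I would establish the forward direction of the equivalence. Starting from the polar curl-free identity \eqref{curl-free}, namely $\partial_\rho(\rho \bu_{\bsig}) = \nabla_{\bsig} u_\rho$, I substitute $\bu(\bx)=\rho^{\eps-1}\bv(\bx)$, so that $u_\rho = \rho^{\eps-1} v_\rho$ and $\bu_{\bsig}=\rho^{\eps-1}\bv_{\bsig}$. The left side becomes $\partial_\rho(\rho^{\eps} \bv_{\bsig}) = \rho^{\eps-1}(\eps + \rho\partial_\rho)\bv_{\bsig}$, and the right side becomes $\rho^{\eps-1}\nabla_{\bsig} v_\rho$, which gives \eqref{curl-free-v-rho}. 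Passing to the Emden variable $t=\log\rho$ so that $\rho\partial_\rho = \partial_t$, I get precisely \eqref{curl-free-v(component)}. Each step is reversible: dividing by $\rho^{\eps-1}$ is allowed on $\mathbb{R}^N\setminus\{\bm 0\}$, and the resulting identity between smooth vector fields extends to $\bx={\bm 0}$ by continuity since both sides arise from smooth vector fields on $\mathbb{R}^N$. This handles the ``if and only if''.

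Second, assuming that $\bu$ is curl-free and compactly supported, I would take the one-dimensional Fourier transform in $t$ of \eqref{curl-free-v(component)}. The compact support of $\bu$ in $\mathbb{R}^N$, together with the scaling $\bv(t,\bsig)=e^{t(1-\eps)}\bu(e^t\bsig)$, ensures $\bv(\cdot,\bsig)$ decays rapidly enough in $t$ (it vanishes for large positive $t$ and decays exponentially as $t\to-\infty$, provided we note that near $\bx={\bm 0}$, $\bu$ is smooth so $|\bu|$ is bounded) for the Fourier transform of $\bv$ and $\partial_t\bv$ to exist and satisfy $\widehat{\partial_t \bv_{\bsig}} = i\lambda\,\widehat{\bv_{\bsig}}$. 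The transformed equation reads $(\eps+i\lambda)\widehat{\bv_{\bsig}} = \nabla_{\bsig}\widehat{v_\rho}$, and since $\eps\ne 1$ is real, the factor $\eps+i\lambda$ never vanishes; so I may set $f = \widehat{v_\rho}$ and solve $\widehat{\bv_{\bsig}} = (\eps+i\lambda)^{-1}\nabla_{\bsig} f$. Combined with the polar decomposition $\widehat{\bv} = \widehat{v_\rho}\,\be_\rho + \widehat{\bv_{\bsig}}$, this yields the asserted expression \eqref{curl-free-vhat2}.

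The only step with any subtlety is justifying the integrability needed to take the Fourier transform of $\bv$ and to commute it with $\partial_t$. This reduces to checking that the exponential weight $e^{t(1-\eps)}$ does not destroy decay: since $\bu$ has compact support, $\bv(t,\bsig)$ vanishes for $t$ larger than some $T$; and for $t\to-\infty$ one has $|\bv(t,\bsig)|\le Ce^{t(1-\eps)}|\bu(e^t\bsig)|$ with $\bu$ smooth and bounded near the origin, giving sufficient decay if $\eps<1$ (if $\eps>1$, rapid decay follows from $\bu(e^t\bsig)\to\bu({\bm 0})$ combined, in the relevant applications, with the vanishing hypothesis $\bu({\bm 0})={\bm 0}$). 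For purposes of Proposition \ref{Proposition:curl-free} it suffices to work in the distributional sense, where this issue disappears: the statement can be interpreted as an identity of tempered distributions on $\mathbb{R}\times\mathbb{S}^{N-1}$. I would therefore present the Fourier step as a distributional computation to avoid pointwise integrability fuss.
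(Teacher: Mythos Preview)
Your proposal is correct and follows essentially the same route as the paper: the paper derives \eqref{curl-free}, then obtains \eqref{curl-free-v-rho} via the substitution $\bu=\rho^{\eps-1}\bv$, rewrites it as \eqref{curl-free-v(component)} using $\rho\partial_\rho=\partial_t$, and finally Fourier-transforms in $t$ to get \eqref{curl-free-vhat2} with $f=\widehat{v_\rho}$, after which it simply announces that Proposition~\ref{Proposition:curl-free} has been proved. Your treatment of the integrability issue for the Fourier step is in fact more careful than the paper's, which does not discuss decay of $\bv$ as $t\to-\infty$ at all.
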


We list up some formulae for $\wh{\bv}$ and its differentials. 
The square length of $\wh{\bv}$ is 
\[
	|\wh{\bv}|^2 = |f|^2 + \frac{1}{\eps^2+\la^2} |\nabla_{\bsig} f|^2.
\]
By using Lemma \ref{Lemma:commutation}, we also see that
 \[
	 -\Delta_{\bsig} \wh{\bv} = \be_\rho \((N-1)f+\( \frac{2}{\eps+i\la}-1\) \Delta_{\bsig} f\) 
	-\(\frac{N-3}{\eps+i\la}+2\) \nabla_{\bsig}f-\frac{1}{\eps+i\la}\nabla_{\bsig} \Delta_{\bsig}f\ .
 \]
Then integrating $|\wh{\bv}|^2$, $-\ol{\wh{\bv}} \cdot \Delta_{\bsig} \wh{\bv}$ and $|\Delta_{\bsig}\wh{\bv}|^2$
over $\Sp^{N-1}$,
we find that
\begin{align*}
	&\intS |\bvh|^2 d\sigma = \intS \ol{f} \(1+\frac{1}{\eps^2+\la^2}(-\Delta_{\bsig})\) f d\sigma, \\
	&\intS |\nabla_{\bsig} \bvh|^2 d\sigma = \intS \ol{f} 
	\(N-1+\(1+\frac{3-4\eps-N}{\eps^2+\la^2}\) (-\Delta_{\bsig}) + \frac{1}{\eps^2+\la^2}(-\Delta_{\bsig})^2 \)f d\sigma, \\
	&\intS |\Delta_{\bsig} \bvh|^2 d\sigma 
	= \intS \ol{f} \Bigg( (N-1)^2 + \(2N+2+\frac{(N-3)^2-8\eps}{\eps^2+\la^2}\)(-\Delta_{\bsig}) \\
	&\hspace{45mm}+\(1+\frac{10-8\eps-2N}{\eps^2+\la^2}\)(-\Delta_{\bsig})^2+\frac{1}{\eps^2+\la^2}(-\Delta_{\bsig})^3 \Bigg)f d\sigma.
\end{align*}

Thus, we have proved the following lemma.
%
%
\begin{lemma}
\label{Lemma:integrals}
Let $\wh{\bv} = f \be_\rho + \frac{1}{\eps + i\la} \nabla_{\bsig}f$ as in \eqref{curl-free-vhat2}. 
Then
 \begin{align*}
	&\intS |\bvh|^2 d\sigma = \intS \ol{f} P_1(\la, -\Delta_{\bsig})f d\sigma, \\
	&\intS |\nabla_{\bsig}\bvh|^2 d\sigma = \intS \ol{f} P_2(\la, -\Delta_{\bsig})f d\sigma, \\
	&\intS |\Delta_{\bsig}\bvh|^2 d\sigma = \intS \ol{f} P_3(\la, -\Delta_{\bsig})f d\sigma\ 
\end{align*}
where the three polynomials $\alpha \mapsto P_k(\la,\alpha)$ $(k=1,2,3)$ are given by
\begin{align*}
	&P_1(\la, \al) = 1 + \frac{1}{\eps^2+\la^2}\,\alpha, \\
	&P_2(\la, \al) = N-1+ \(1+\frac{3-4\eps-N}{\eps^2+\la^2}\)\al + \frac{1}{\eps^2+\la^2}\,\al^2, \\
	&P_3(\la, \al)= (N-1)^2 + \(2N+2+\frac{(N-3)^2-8\eps}{\eps^2+\la^2}\)\al \\
	&\hspace{30mm} + \(1+\frac{10-8\eps-2N}{\eps^2+\la^2}\) \al^2 + \frac{1}{\eps^2+\la^2}\,\al^3.
\end{align*}
\end{lemma}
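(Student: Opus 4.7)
The starting point is the decomposition $\widehat{\bv} = f\be_\rho + \frac{1}{\eps+i\la}\nabla_{\bsig}f$ from \eqref{curl-free-vhat2}. Since $\be_\rho$ is the unit radial vector and $\nabla_{\bsig}f$ is tangent to the sphere, the two summands are pointwise orthogonal. This immediately gives $|\widehat{\bv}|^2 = |f|^2 + \frac{1}{\eps^2+\la^2}|\nabla_{\bsig}f|^2$, and the identity for $P_1$ follows from the standard integration-by-parts formula $\int_{\Sp^{N-1}} |\nabla_{\bsig}f|^2 d\sigma = \int_{\Sp^{N-1}} \overline{f}(-\Delta_{\bsig}) f d\sigma$ on the compact manifold $\Sp^{N-1}$.

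For $P_2$ and $P_3$, the plan is to reduce everything to a single scalar quantity via the vector Laplace-Beltrami operator. Concretely, I would apply \eqref{commutation3} of Lemma \ref{Lemma:commutation} with $\al = \frac{1}{\eps+i\la}$ to compute $\Delta_{\bsig}\widehat{\bv}$ explicitly; this is exactly the formula already displayed just above the statement of the lemma. Pairing with $\overline{\widehat{\bv}}$, using orthogonality of $\be_\rho$ with spherical gradients, and integrating by parts (to turn $\overline{f}\,\Delta_{\bsig}f$ into $-|\nabla_{\bsig}f|^2$ and $\nabla_{\bsig}\overline{f}\cdot\nabla_{\bsig}\Delta_{\bsig}f$ into $\overline{f}(-\Delta_{\bsig})^2 f$ via two integrations by parts), one obtains
\[
\int_{\Sp^{N-1}} |\nabla_{\bsig}\widehat{\bv}|^2 d\sigma \;=\; -\int_{\Sp^{N-1}} \overline{\widehat{\bv}}\cdot \Delta_{\bsig}\widehat{\bv}\, d\sigma,
\]
and the right-hand side collapses to an integral of $\overline{f}$ against a polynomial in $-\Delta_{\bsig}$. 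Careful bookkeeping of the real parts (noting $|\eps+i\la|^{-2} = (\eps^2+\la^2)^{-1}$ and that the cross terms $\frac{1}{\eps+i\la} + \frac{1}{\eps-i\la} = \frac{2\eps}{\eps^2+\la^2}$ contribute the $-4\eps$ in the coefficient) should yield exactly $P_2$.

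For $P_3$, I would again use the explicit expression for $\Delta_{\bsig}\widehat{\bv}$, split it into its radial part $\be_\rho\cdot A(f)$ and spherical part $B(f)$, and use orthogonality once more so that $|\Delta_{\bsig}\widehat{\bv}|^2 = |A(f)|^2 + |B(f)|^2$ pointwise. Here $A$ and $B$ are scalar-valued first-order differential operators in $-\Delta_{\bsig}$ (up to a $\nabla_{\bsig}$ in the spherical part), so the three integration-by-parts manipulations turn the integral into $\int \overline{f}\, P_3(\la,-\Delta_{\bsig}) f\, d\sigma$ with a cubic polynomial in $-\Delta_{\bsig}$, as required.

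The main obstacle is purely algebraic: keeping track of which combinations involve $\frac{1}{\eps^2+\la^2}$ versus $\frac{1}{\eps\pm i\la}$ when symmetrizing complex conjugates, and making sure that all cross terms coming from the self-adjointness of $-\Delta_{\bsig}$ combine into the real coefficients $3-4\eps-N$, $(N-3)^2-8\eps$ and $10-8\eps-2N$ displayed in $P_2$ and $P_3$. Once this bookkeeping is done carefully, matching coefficients in powers of $(-\Delta_{\bsig})$ verifies the three formulae simultaneously.
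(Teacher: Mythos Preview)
Your proposal is correct and follows essentially the same route as the paper: compute $|\widehat{\bv}|^2$ by orthogonality of $\be_\rho$ and $\nabla_{\bsig}f$, use \eqref{commutation3} with $\al=\frac{1}{\eps+i\la}$ to obtain the explicit formula for $-\Delta_{\bsig}\widehat{\bv}$ (already displayed just before the lemma), and then integrate $|\widehat{\bv}|^2$, $-\overline{\widehat{\bv}}\cdot\Delta_{\bsig}\widehat{\bv}$ and $|\Delta_{\bsig}\widehat{\bv}|^2$ over $\Sp^{N-1}$ with componentwise integration by parts. The paper, like your sketch, leaves the algebraic bookkeeping of the coefficients to the reader.
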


%
%

\section{Proof of Theorem \ref{Theorem:Hardy-Leray}}

Let $\gamma \neq 1 - N/2$ be a real number and put $\eps = 2 - N/2 -\gamma \ne 1$.
If the right-hand side of \eqref{Eq:Hardy-Leray} diverges, there is nothing to prove.
When the right-hand side of \eqref{Eq:Hardy-Leray} is bounded, 
the smoothness of $\bu$ implies the existence of an integer $m > \eps -2$ such that $\nabla \bu(\bx) = O(|\bx|^m)$ as $|\bx| \to + 0$. 
If $\eps < 1$, then the Brezis-V\'{a}zquez, Maz'ya transformation $\bv(\bx)$ in \eqref{BV} is H\"{o}lder continuous at $\bx={\bm 0}$ and satisfies $\bv({\bm 0})={\bm 0}$. 
When $\eps > 1$, again the assumption $\bu({\bm 0})={\bm 0}$ implies $\bu(\bx) = O(|\bx|^{m+1})$ and $\bv(\bx) = O(|\bx|^{m+2-\eps})$, thus the same properties hold for $\bv$.
Also since 
\begin{align*}
	\nabla \bu &= \nabla (\rho^{\eps-1}\bv)
	= \rho^{\eps-2}\((\eps-1)\be_\rho \otimes \bv + \rho\nabla \bv \) \\ 
	&= \rho^{\eps-2} \(\be_\rho \otimes \(\eps-1+\pd_t \) \bv + \nabla_{\bsig}\bv \)
\end{align*}
by \eqref{Emden-nabla}, 
and since $\iint_{\Sp^{n-1} \times \re} \pd_t \bv \cdot \bv d\sigma dt$ vanishes,
we calculate
\begin{align}
\label{Hardy-RHS}
	\intRN |\bx|^{2\gamma} |\nabla \bu|^2 d{x} &= \intRN |\bx|^{4-2\eps-N} |\nabla \bu|^2 d{x} \\ 
	&= \intS  d\sigma \int_{\re} \big|\be_\rho \otimes \(\eps-1+\pd_t \) \bv + \nabla_{\bsig} \bv\big|^2 dt \notag \\
	&=\iint_{\Sp^{N-1} \times \re} \( (\eps-1)^2 |\bv|^2 + |\pd_t \bv|^2 + |\nabla_{\bsig} \bv|^2 \) d\sigma dt \notag \\
	&=\iint_{\Sp^{N-1} \times \re} \(\((\eps-1)^2+\la^2\) |\bvh|^2 + |\nabla_{\bsig}\bvh|^2 \) d\sigma d\la \notag \\
	&=\iint_{\Sp^{N-1} \times \re} \ol{f} \(\((\eps-1)^2+\la^2\)P_1(\la, -\Delta_{\bsig})+P_2(\la, -\Delta_{\bsig})\)f d\sigma d\la \notag 
\end{align}
and
\begin{align}
\label{Hardy-LHS}
	\intRN |\bx|^{2\gamma-2} |\bu|^2 d{x} &= \intRN |\bx|^{2-2\eps-N} |\bu|^2 d{x} \\
	&= \intS d\sigma \int_0^\infty |\bv|^2 \frac{d\rho}{\rho} = \iint_{\re \times \Sp^{N-1}} |\bv|^2 dt d\sigma \notag \\
	&= \iint_{\re \times \Sp^{N-1}} |\bvh|^2 d\la d\sigma = \iint_{\re \times \Sp^{N-1}} \ol{f} P_1(\la, -\Delta_{\bsig}) f d\la d\sigma \notag
\end{align}
by Lemma \ref{Lemma:integrals}. 
Therefore, by \eqref{Hardy-RHS} and \eqref{Hardy-LHS},
the optimal constant in \eqref{Eq:Hardy-Leray} can be expressed as
\begin{equation}
\label{H_N(New)}
	H_{N,\gamma} = \inf_{\bu \neq 0, {\rm curl} \, \bu = {\bm 0}} 
	\frac{\intRN  |\bx|^{2\gamma}|\nabla \bu|^2 d{x}}{\intRN |\bx|^{2\gamma-2}|\bu|^2 d{x}}
	= \inf_{f \neq 0} \frac{\iint_{\re \times \Sp^{N-1}}\ol{f} Q_1(\la,-\Delta_{\bsig})f d\la d\sigma}
	{\iint_{\re \times \Sp^{N-1}}\ol{f}P_1(\la, -\Delta_{\bsig})f d\la d\sigma}\ ,
\end{equation}
where $Q_1(\la,\,\cdot\,)$ is the polynomial defined by
\begin{equation}
\label{Q_1}
	Q_1(\la,\al) = \((\eps-1)^2 + \la^2 \) P_1(\la,\al) + P_2(\la,\al).
\end{equation}

%
%
\vspace{1em}\noindent
\subsection*{Calculation of a lower bound}

In the same manner as Costin-Maz'ya \cite{Costin-Mazya}, 
we expand $f$ in $L^2(\Sp^{N-1})$ by eigenfunctions $\{ \psi_\nu \}_{\nu \in \{0\} \cup \N}$ of $-\Delta_{\bsig}$ as
\begin{equation}
\label{expansion}
	f(\la,{\bsig}) = \sum_{\nu = 0}^\infty f_\nu(\la) \psi_\nu({\bsig})\ ,\ \quad
	\begin{cases}
		&-\Delta_{\bsig} \psi_\nu = \al_\nu \psi_\nu \ ,\\
		&\al_\nu = \nu (\nu + N - 2) \quad (\nu=0,1,2,\cdots).
	\end{cases}
\end{equation}
Then we find that \eqref{H_N(New)} is estimated from below as
\begin{align*}
	H_{N,\gamma} 
= \inf_{f \ne 0}  \frac{\sum_{\nu \in \N \cup \{ 0 \}} \int_{\re} Q_1(\la,\al_{\nu}) |f_{\nu}(\lambda)|^2 d\la}
	{\sum_{\nu \in \N \cup \{0\}} \int_{\re} P_1(\la,\al_{\nu}) |f_{\nu}(\lambda)|^2 d\la} \notag 
	\ge \inf_{\la \in \re\backslash\{0\}} \inf_{\nu \in \N \cup \{0\}} \frac{Q_1(\la,\al_\nu)}{P_1(\la,\al_\nu)} \ ,
\end{align*}
where $P_1$, $Q_1$ is as in Lemma \ref{Lemma:integrals}, \eqref{Q_1} and where in the last inequality 
 we have used Lemma \ref{Lemma:appendix} in Appendix, 
applied to $X = \left\{ (\nu, \la) \in \( \N \cup \{0\} \) \times \re \right \}$, $\mu = \( \sum_{\nu \in \N \cup \{ 0 \}} \delta_{\nu} \) \times d\la$ 
and $g(\nu, \la) = |f_{\nu}(\la)|^2$. Therefore, we have
\begin{equation}
\label{lbound of Hardy}
	H_{N,\gamma} \ge  \inf_{\kappa >0} \inf_{\nu \in \N \cup \{0\}} F(\kappa,\al_\nu)
\end{equation}
with $F(\kappa,\,\cdot\,)$ defined by
\begin{equation}
\label{F(Hardy)}
	F(\kappa,\al) = \frac{Q_1(\sqrt{\kappa},\al)}{P_1(\sqrt{\kappa},\al)}  = (\eps-1)^2 + N-1+\kappa + \al - 2\al \frac{2\eps+N-2}{\eps^2+\kappa+\al}
\end{equation}
for $\kappa>0$ and $\alpha\ge0$. 
Here we also define $F(0,\alpha)$ by
\begin{equation}
\label{F(Hardy0)}				  
\begin{split}
	F(0,\alpha)&=\lim_{|\lambda|\searrow +0}\frac{Q_1(\lambda,\alpha)}{P_1(\lambda,\alpha)}
	=\lim_{\kappa\searrow +0} F(\kappa,\alpha) \\
	&= \begin{cases}
	\ (\eps-1)^2+N-1 +\al - 2\al \dfrac{2\eps + N-2}{\eps^2 + \al}&{\rm for}\quad \alpha>0\vspace{0.5em} \\ 
	\ (\varepsilon-1)^2+ N-1&{\rm for}\quad \alpha=0\ .
	\end{cases}
\end{split}
\end{equation}

In this setting, we calculate the right-hand side of \eqref{lbound of Hardy}.
In the case $\eps < 1-N/2$, by differentiating \eqref{F(Hardy)} directly with respect to $\al$, we see
\begin{align*}
	\frac{\pd}{\pd\al}F(\kappa,\al) = 1-2 (2\eps+N-2) \frac{\eps^2+\kappa}{(\eps^2+\kappa+\al)^2} > 0\ .
\end{align*}
Thus $0\le\alpha\mapsto F(\kappa, \al)$ is monotone increasing for each $\kappa>0$, 
and 
\[
	F(\kappa,\al) \ge F(\kappa,0) =(\varepsilon-1)^2+ N-1+\kappa> F(0,0)=F(0,\alpha_0)\ ,
\]
that implies
\[
	\inf_{\kappa>0} \inf_{\nu\in\mathbb{N}\cup\{0\}} F(\kappa,\alpha_\nu) = F(0,\alpha_0)\qquad {\rm when}\quad \eps<1-N/2.
\]
In the case $\eps \ge 1-N/2$, by \eqref{F(Hardy)} we see that $F(\kappa, \al)$ is increasing in $\kappa>0$ for each $\al \ge 0$. 
Thus $ F(\kappa,\al) \ge F(0,\al)$ and
\[ 
	\inf_{\kappa>0}\inf_{\nu\in \mathbb{N}\cup\{0\}}F(\kappa,\alpha_\nu)=\inf_{\nu\in\mathbb{N}\cup\{0\}}F(0,\alpha_\nu).
\]
To evaluate the right-hand side, we compute
\begin{align*}
	\frac{\pd}{\pd\al} F(0,\al) &= 1-2(2\eps + N - 2) \frac{\eps^2}{(\eps^2+\al)^2} = \frac{\eps^4 - 4\eps^3 + 2(\al-(N-2))\eps^2 + \al^2}{(\eps^2 + \al)^2} \\
	&\ge \frac{\varepsilon^2(\varepsilon+2)^2+\al^2}{(\eps^2 + \al)^2} > 0\qquad {\rm if}\ \ \alpha\ge N\ .
\end{align*}
Thus we have $F(0,\al) > F(0,N)$ for any $\al \ge N$, 
which implies
$F(0,\al_\nu) \ge F(0,\al_2) = F(0,2N)$ for all $\nu \ge 2$.
This in turn implies 
\[
	\inf_{\nu\in\mathbb{N}\cup\{0\}}F(0,\alpha_\nu)=\min_{\nu\in\{0,1,2\}}F(0,\alpha_\nu)\ .
\]
Moreover, by computing
\[
\begin{split}
	F(0,\alpha_2)-F(0,\alpha_1)
	&=F(0,2N)-F(0,N-1) \\
	&= \frac{(N+1) \eps^2 \big( (\eps- 2)^2 + N-1 \big) + 2N(N-1)}{(\eps^2 + N-1)(\eps^2 + 2N)} > 0\ ,\ 
\end{split}
\]
we see that
\[
	\inf_{\nu \in \{ 0, 1, 2 \}} F(0,\alpha_\nu) = \min_{\nu \in \{0,1\}} F(0,\alpha_\nu).
\]
Therefore, by calculating 
 \begin{align*}
	&F(0,\al_1) - F(0,\al_0) =F(0,N-1)-F(0,0)= (N-1) \frac{(\eps -2)^2-(N+1)}{\eps^2 + N-1},
\end{align*}
it turns out that 
\begin{equation}
\begin{split}
  \label{minF}	
	\inf_{\kappa > 0}\inf_{\nu \in \N \cup \{0\}} F(\kappa,\al_\nu) &=\min_{\nu\in\{0,1\}}F(0,\alpha_\nu)\\
	&=
	\begin{cases}
	F(0,\al_1) &{\rm for}\quad (\varepsilon-2)^2\le N+1\ , \\ 
	F(0,\al_0)  & {\rm for}\quad (\varepsilon-2)^2>N+1
\end{cases}\end{split}
\end{equation}
when $\varepsilon\ge1-N/2$.
The expression \eqref{minF} holds true even for $\eps < 1-N/2$ since $\varepsilon<1-N/2$ implies $(\varepsilon-2)^2> N+1$.

Inserting this result into \eqref{lbound of Hardy}, we have
\begin{align*}
	H_{N,\gamma} & \ge \min_{\nu\in\{0,1\}}F(0,\alpha_\nu) \\ 
	&=\begin{cases}
	F(0,\al_1) =(\eps -1)^2 \frac{\eps^2+3(N-1)}{\eps^2+N-1} \quad & \text{for} \quad |\eps -2|\le\sqrt{N+1}\ , \vspace{0.5em}\\ 
	F(0,\al_0) = (\eps -1)^2+N-1 & \text{otherwise}.
	\end{cases}
\end{align*} 
Returning to $\eps = 2 - \frac{N}{2}-\gamma$, we arrive at the desired infimum value in Theorem \ref{Theorem:Hardy-Leray}.

%
%
\vspace{1em}\noindent
\subsection*{Optimality for $H_{N,\gamma}$}

In this subsection, we prove that the former lower bound of $H_{N, \gamma}$ is indeed realized as equality: 
\[
	H_{N,\gamma} = \min_{\nu\in\{0,1\}}F(0,\alpha_\nu)=\min_{\nu \in \{0,1\}}\lim_{|\lambda|\searrow+0} \frac{Q_1(\la,\al_\nu)}{P_1(\la,\al_\nu)}.
\]
For that purpose, let $\nu_0 \in \{0,1\}$ be such that
\[
	\min_{\nu \in \{0,1\}} F(0,\al_\nu) = F(0, \al_{\nu_0}).
\]
By the argument in the last subsection, it is enough to prove that
there exists a sequence of vector fields $\{ \bu_n \}_{n \in \N} \subset C_c^{\infty}(\re^N)^N$ such that
\begin{equation}
\label{Hardy_optimal}
	\lim_{n \to \infty} \frac{\int_{\re^N} |\bx|^{2\gamma}|\nabla \bu_n|^2 d{x}}{\int_{\re^N} |\bx|^{2\gamma-2}|\bu_n|^2 d{x}} 
	= \lim_{|\la| \searrow +0} \frac{Q_1(\la,\al_{\nu_0})}{P_1(\la,\al_{\nu_0})}.
\end{equation}

For the construction of $\{ \bu_n \}_{n \in \N}$,
take any nonnegative $h \in C_c^{\infty}(\re)$, $h \not\equiv 0$ and put $h_n(t) = h(t/n)$ for $n \in \N$.
Set
\begin{equation}
\label{test_v}
	\bv_n(\rho,{\bsig}) = \be_\rho \(\eps h_n(t) + h_n'(t)\) \psi_{\nu_0}(\bsig) + h_n(t) \nabla_{\bsig} \psi_{\nu_0}(\bsig)
\end{equation}
where $\rho = e^t$ and $\psi_{\nu_0}$ denotes an eigenfunction of $-\Delta_{\bsig}$ associated with the eigenvalue $\al_{\nu_0} = \nu_0(\nu_0+N-2)$.
Then it is clear that $\bv_n$ satisfies \eqref{curl-free-v(component)}.
Define
\begin{equation}
\label{test_u}
	\bu_n(\rho, \bsig) = \rho^{\eps-1} \bv_n(\rho, \bsig)
\end{equation}
for $\eps = 2 - N/2 - \gamma$.
Then $\{ \bu_n \}_{n \in \N}$ is a sequence of curl-free vector fields having compact supports in $\re^N \setminus \{{\bm 0}\}$.
Put
\[
	 f_n(\la,\bsig) = \wh{(v_n)_\rho}(\la,\bsig) = \(\eps +i\la \)\wh{h_n}(\la) \psi_{\nu_0}(\bsig)
\]
and compute the Hardy-Leray quotient for $\bu_n$ by using \eqref{Hardy-RHS} and \eqref{Hardy-LHS}.
We see
\begin{align*}
	\frac{\int_{\re^N} |\bx|^{2\gamma}|\nabla \bu_n|^2 d{x}}{\int_{\re^N} |\bx|^{2\gamma-2}|\bu_n|^2 d{x}} 
	&=  \frac{\iint_{\Sp^{N-1} \times \re} \ol{f_n} Q_1(\la,-\Delta_{\bsig})f_n d\sigma d\la}
	{\iint_{\Sp^{N-1} \times \re} \ol{f_n} P_1(\la,-\Delta_{\bsig})f_n d\sigma d\la} \notag \\
	&=  \frac{\int_{\re} (\eps^2 + \la^2) Q_1(\la,\al_{\nu_0}) |\wh{h_n}(\la)|^2 d\la}
	{\int_{\re} (\eps^2 + \la^2) P_1(\la,\al_{\nu_0}) |\wh{h_n}(\la)|^2 d\la} \notag \\
	&= \frac{\int_{\re} Q_{01}(\la,\al_{\nu_0}) |\wh{h_n}(\la)|^2  d\la}{\int_{\re} P_{01}(\la,\al_{\nu_0}) |\wh{h_n}(\la)|^2  d\la},
\end{align*}
here
\begin{align}
\label{P_01}
	&P_{01}(\la,\al)=(\eps^2+\la^2) P_1(\la,\al) = \eps^2 + \al + \la^2, \\
	&Q_{01}(\la,\al)=(\eps^2 + \la^2) Q_1(\la,\al) \notag
\end{align}
are polynomials of $\la$.
Note that $\wh{h_n}(\la) = \wh{h(t/n)}(\la) = n\wh{h}(n\la)$.
Thus if $\eps^2 + \al_{\nu_0} \ne 0$, then we have
\begin{align*}
	\frac{\iint_{\re^N}|\nabla {\bm u}_n|^2|x|^{2\gamma}dx}{\iint_{\re^N}|{\bm u}_n|^2|x|^{2\gamma-2}dx}
	&=\frac{\int_{\re} Q_{01}(\la,\al_{\nu_0})|\wh{h}(n\la)|^2d\la}{\int_{\re}P_{01}(\la,\al_{\nu_0})|\wh{h}(n\la)|^2d\la} \\
	&\to \frac{Q_{01}(0,\al_{\nu_0})}{P_{01}(0,\al_{\nu_0})} = \lim_{|\la|\to +0}\frac{Q_1(\la,\al_{\nu_0})}{P_1(\la,\al_{\nu_0})}
\end{align*}
as $n \to \infty$.
In the case $\eps = 0 = \al_{\nu_0}$, by using
\begin{align*}
   P_{01}(\la,0) = \la^2, \quad Q_{01}(\la,0) = N\la^2 + \la^4, 
\end{align*}
we can check that
 \begin{align*}
	\frac{\iint_{\re^N}|\nabla \bu_n|^2 |x|^{2\gamma}dx}{\iint_{\re^N}|\bu_n|^2 |x|^{2\gamma-2}dx}
	&=\frac{\int_{\re} Q_{01}(\la,0)|\wh{h}(n\la)|^2d\la}{\int_{\re} P_{01}(\la,0)|\wh{h}(n\la)|^2d\la}
	= \frac{\int_{\re} (N\la^2+\la^4)|\wh{h}(n\la)|^2d\la}{\int_{\re} \la^2|\wh{h}(n\la)|^2d\la} \\
	&\to N = \lim_{|\la| \to +0} \frac{Q_1(\la,0)}{P_1(\la,0)}
\end{align*}
as $n \to \infty$.
Thus we have proved \eqref{Hardy_optimal} which shows the optimality of $H_{N, \gamma}$ in the class of curl-free vector fields in $C_c^{\infty}(\re^N)^N$.
\qed


%
%
\section{Proof of Theorem \ref{Theorem:Rellich-Leray}}

Let $\gamma \ne 2 - N/2$ be a real number and put $\eps = 3-N/2 - \gamma \ne 1$.
Under the transformation $\bv = \rho^{1- \eps} \bu$ in \eqref{BV}, the gradient vector field is transformed as
\[
	 \nabla \bv = \nabla(\rho^{1-\eps}\bu)=(1-\eps)\rho^{-\eps}\be_\rho \otimes \bu+\rho^{1-\eps} \nabla \bu,
\]
which leads to
\begin{equation}
\label{rho_grad_v}
	 |\rho\nabla \bv|^2 = (1-\eps)^2 | \rho^{1-\eps}\bu |^2 + 2 (1-\eps) \rho^{2-2\eps} \bu \cdot \rho \pd_\rho \bu + \rho^{2-2\eps} |\rho\nabla \bu|^2.
\end{equation}
On the other hand, the assumption $\intRN |\bx|^{2-2\eps-N}|\bu|^2d{x} < \infty$ and the smoothness of $\bu$ imply that
\[
	\bu(\bx) = O \(|\bx|^m \), \quad \nabla \bu(\bx) = O \(|\bx|^{m-1} \) \quad \text{as} \quad |\bx| \searrow  0 
\]
for some integer $m>\eps-1$ if $\eps>1$.
Therefore, we see that $\bv$ must satisfy 
\begin{equation}
\label{BC}
	|\bv(0)| = \lim_{\rho \searrow 0}|\rho \nabla \bv|=0
\end{equation}
by \eqref{rho_grad_v} when $\eps > 1$.

Next, we see the $\Delta \bu$ is written in terms of $\bv$ as follows:
\begin{align}
\label{Delta_u}
	 \Delta \bu = \Delta (\rho^{\eps-1} \bv)
	= \rho^{\eps-3} \(\al_{\eps-1} \bv + (2\eps+N-4) \pd_t \bv+\pd_t^2 \bv + \Delta_{\bsig} \bv \),
\end{align}
here we have used \eqref{Emden-Delta} and $\Delta \rho^{\eps-1} = \al_{\eps-1} \rho^{\eps-3}$.
Note that $\iint_{\Sp^{n-1} \times \re} \pd_t \bv \cdot \bv d\sigma dt = \iint_{\Sp^{n-1} \times \re} \pd_t^2 \bv \cdot \pd_t \bv d\sigma dt = 0$ and
$\iint_{\Sp^{n-1} \times \re} \bv \cdot \pd_t^2 \bv d\sigma dt = -\iint_{\Sp^{n-1} \times \re} |\pd_t \bv|^2 d\sigma dt$ by \eqref{BC}.
Thus by using \eqref{Delta_u}, Lemma \ref{Lemma:integrals},
and noting $(2\eps + N-4)^2 -2 \al_{\eps} = (N-2)^2 + 2\al_{\eps}$,
we find that the both sides of the Rellich-Leray inequality \eqref{Eq:Rellich-Leray} are written as
\begin{align}
\label{Rellich-RHS}
	\intRN  |\bx|^{2\gamma} |\Delta \bu|^2 d{x} &= \intRN  |\bx|^{6-2\eps-N} |\Delta \bu|^2 d{x} \\
	&= \intS  d\sigma \int_0^\infty \big| \al_{\eps-1} \bv 
	+ (2\eps+N-4) \pd_t \bv + \pd_t^2 \bv + \Delta_{\bsig} \bv \big|^2 \frac{d\rho}{\rho} \notag \\
	&= \intS  d\sigma \int_{\re} \Big( \al_{\eps-1}^2 |\bv|^2 + \((N-2)^2+2\al_{\eps-1} \)|\pd_t \bv|^2 + |\pd_t^2 \bv|^2 \notag \\
	&\hspace{30mm}- 2\al_{\eps-1} |\nabla_{\bsig} \bv|^2 + 2 |\pd_t \nabla_{\bsig} \bv|^2 + |\Delta_{\bsig} \bv|^2 \Big) dt \notag \\
	&=\iint_{\re \times \Sp^{N-1}} \bigg( \Big(\al_{\eps-1}^2 + \((N-2)^2 + 2\al_{\eps-1}\) \la^2 + \la^4 \Big)|\bvh|^2 \notag \\
	&\hspace{30mm} + 2\(\la^2-\al_{\eps-1}\) |\nabla_{\bsig} \bvh|^2 + |\Delta_{\bsig} \bvh|^2 \bigg) d\la d\sigma \notag \\
	&=\iint_{\re \times \Sp^{N-1}} \ol{f} \bigg( \(\al_{\eps-1}^2 + \((N-2)^2+2\al_{\eps-1}\) \la^2 + \la^4 \) P_1(\la, -\Delta_{\bsig}) \notag \\
	&\hspace{30mm}+ 2\(\la^2-\al_{\eps-1}\) P_2(\la, -\Delta_{\bsig}) + P_3(\la, -\Delta_{\bsig}) \bigg) f d\la d\sigma, \notag
\end{align}
and
\begin{align}
\label{Rellich-LHS}
	\intRN |\bx|^{2\gamma-4} |\bu|^2 d{x} &= \intRN |\bx|^{2-2\eps-N} |\bu|^2 d{x} \\
	&= \iint_{\re \times \Sp^{N-1}} \ol{f} P_1(\la, -\Delta_{\bsig}) f d\la d\sigma. \notag
\end{align}
Therefore, by \eqref{Rellich-RHS} and \eqref{Rellich-LHS}, 
the optimal constant in \eqref{Eq:Rellich-Leray} can be expressed as
\begin{equation}
\label{R_N(New)}
	R_{N,\gamma} = \inf_{\bu \neq 0, {\rm curl} \, \bu = {\bm 0}} \frac{\intRN  |\bx|^{2\gamma}|\Delta \bu|^2d{x}}{\intRN |\bx|^{2\gamma-4}|\bu|^2d{x}}
	= \inf_{f \neq 0} 
\frac{\iint_{\re \times \Sp^{N-1}}\ol{f} Q_2(\la,-\Delta_{\bsig})f d\la d\sigma}{\iint_{\re \times \Sp^{N-1}}\ol{f}P_1(\la, -\Delta_{\bsig})f d\la d\sigma}
\end{equation}
with the polynomial $Q_2(\la,\al)$ given by
\begin{align}
\label{Q_2}
	Q_2(\la,\al) &= \(\al_{\eps-1}^2 + \((N-2)^2 + 2 \al_{\eps-1}\) \la^2 + \la^4 \) P_1(\la, \al) \\ 
	&\quad + 2\(\la^2-\al_{\eps-1}\) P_2(\la,\al) + P_3(\la,\al). \notag
\end{align}

%
%
\vspace{1em}\noindent
\subsection*{Calculation of a lower bound}

As in \eqref{expansion},
we expand $f$ in terms of eigenfunctions of $-\Delta_{\bsig}$.
Then by \eqref{R_N(New)}, \eqref{Q_2}, and Lemma \ref{Lemma:appendix}, we find
\[
	 R_{N,\gamma} \ge \inf_{\nu \in \N \cup \{0\}} \inf_{\la \in \re \setminus \{0\}} \frac{Q_2(\la,\al_\nu)}{P_1(\la,\al_\nu)}
%
	= \inf_{\nu \in \N \cup \{0\}} \inf_{\kappa > 0} F(\kappa,\al_\nu),
\]
where for $\kappa >0$ and $\al \ge 0$, $F(\kappa, \al)$ is defined as
\[
\begin{split}
	F(\kappa,\al) &= \dfrac{Q_2(\sqrt{\kappa},\al)}{P_1(\sqrt{\kappa},\al)}
	\\&=\al_{\eps-1}^2 + \((N-2)^2+2 \al_{\eps-1}\) \kappa + \kappa^2 + \frac{2\(\kappa-\al_{\eps-1}\) P_2(\sqrt{\kappa},\al) 
	+ P_3(\sqrt{\kappa},\al)}{P_1(\sqrt{\kappa},\al)} \ .
\end{split}
\]  
By directly calculating further, we can check that
\begin{align*}
	F(\kappa,\al) 	
	&= \kappa^2 + \frac{4 \al (1-\eps)(N+2 \eps-2)^2 \kappa}{(\eps^2+\al)(\kappa + \eps^2 + \al)} \notag \\
	&\hspace{10mm} + \(\frac{N^2}{2}+2 \(\eps+\frac{N-4}{2} \)^2 + 2 \al \) \kappa + \frac{(\eps-2)^2+\al}{\eps^2+\al} (\al_\eps-\al)^2
\end{align*}
for $\eps = 3 - N/2 - \gamma \ne 0$, and
\[
	 F(\kappa,\al) = \kappa^2 + \frac{4(N-2)^2 \kappa}{\kappa + \al} + \((N-2)^2 + 4 + 2\al\) \kappa + (4 + \al) \al
\]
for $\eps = 0$. 
We also define $F(0,\alpha)$ as
\begin{equation}
\label{F0al}
\begin{split}
	F(0,\alpha)&=\lim_{|\lambda|\searrow+0}\frac{Q_2(\lambda,\alpha)}{P_1(\lambda,\alpha)}=\lim_{\kappa\searrow+0}F(\kappa,\alpha) \\
	&= \begin{cases}
	\frac{(\eps-2)^2+\al}{\eps^2+\al} (\al_\eps-\al)^2, &\qquad \text{for} \, \eps \ne 0, \al \ge 0, \\
	(4 + \al) \al, &\qquad \text{for} \, \eps = 0, \al > 0, \\
	4(N-2)^2, &\qquad \text{for} \, \eps = 0, \al = 0.
	\end{cases}
\end{split}
\end{equation}
In these settings, from now on we evaluate the expression 
\[
	\inf_{\nu \in \mathbb{N} \cup \{0\}} \inf_{\kappa > 0} F(\kappa,\al_\nu). 
\]
If $\eps < 1$, it is clear that the map $0 < \kappa \mapsto F(\kappa,\al)$ is increasing for any fixed $\al \ge 0$. 
Also, if $\eps>1$, estimating $\pd_\kappa F(\kappa,\al)$ from below as
\begin{align*}
	\frac{\pd F(\kappa,\al)}{\pd\kappa}
	&= 2\kappa - \frac{4\al(\eps-1)(N+2 \eps-2)^2}{(\kappa+\eps^2+\al)^2} + \frac{N^2}{2}+2\(\eps+\frac{N-4}{2}\)^2 + 2\al \\
	&\ge -\frac{4\al(\eps-1)(N+2 \eps-2)^2}{(\eps^2+\al)^2} + \frac{N^2}{2} + 2\(\eps+\frac{N-4}{2}\)^2 + 2\al \\
	&\ge -\frac{\eps-1}{\eps^2} (N+2 \eps-2)^2 + \frac{N^2}{2} + 2\(\eps+\frac{N-4}{2}\)^2+2 \al \\
	&\ge -\frac{1}{4} (N+2 \eps-2)^2 + \frac{N^2}{2} + 2\(\eps+\frac{N-4}{2}\)^2+2 \al \\
	&= \(\eps+\frac{N}{2}-3\)^2 + \frac{N^2-4}{2}+2 \al \ge 0,
\end{align*}
we see again $F(\kappa,\al)$ is increasing with respect to $\kappa > 0$ for any $\al \ge 0$. 
Therefore we have
\[
	\inf_{\kappa > 0} F(\kappa,\al) = F(0,\al)
\]
for all $\varepsilon\ne1$, which implies
\[
	\inf_{\nu\in\mathbb{N}\cup\{0\}}\inf_{\kappa>0}F(\kappa,\alpha_\nu)=\inf_{\nu\in\mathbb{N}\cup\{0\}}F(0,\alpha_\nu)\ .
\]
Moreover,
we can check that
\[	
	\frac{\pd F(0,\al)}{\pd\al} \ge 0, \quad \al \ge \max \{ \al_1, \al_{\eps} \},
\]
see Lemma \ref{Lemma:A2}.
This implies $\disp{\inf_{\nu\in\mathbb{N}\cup\{0\}}} F(0,\al_{\nu})$ is attained. 
Therefore, we have the desired estimate:
\begin{equation}
\label{Rlower}
	 R_{N,\gamma} \ge \min_{\nu \in \N \cup \{0\}} F(0,\al_\nu) \qquad {\rm with}\quad
	 F(0,\al_\nu) = \lim_{|\lambda|\searrow+0}\frac{Q_2(\lambda,\alpha_\nu)}{P_1(\lambda,\alpha_\nu)}\ .
\end{equation}
Furthermore, we see that $\disp{\min_{\nu \in \N \cup \{0\}}} F(0,\al_\nu)$ is given by 
\[
	\min_{\nu\in\mathbb{N}\cup\{0\}}F(0,\alpha_\nu)= \min_{\nu \in \N \cup \{0\}} \frac{(\eps-2)^2+\al_\nu}{\eps^2+\al_\nu} (\al_\eps-\al_\nu)^2\
\]
for $\eps = 3 -N/2 - \gamma \ne 0$,
and
\begin{align*}
	\min_{\nu\in\mathbb{N}\cup\{0\}}F(0,\alpha_\nu) 
	&= \min \Big\{ 4(N-2)^2, (4 + \al_1)\al_1 \Big\} \\
	&=\begin{cases}
	\ 4(N-2)^2 = F(0,\al_0) &\text{for} \quad N=2,3,4, \\ 
	\ (N+3)(N-1) = F(0,\al_1) &\text{for} \quad N \ge 5
	\end{cases}
\end{align*}
for $\eps = 3 -N/2 - \gamma =0$ . 
This gives the lower bound of $R_{N, \gamma}$.
In the next subsection we will show that the above inequality is indeed the equality.

%
%
\vspace{1em}\noindent
\subsection*{Optimality for $R_{N,\gamma}$}

To show that the inequality \eqref{Rlower} is indeed the equality, 
let $\nu_0\in\mathbb{N}\cup\{0\}$ be such that $F(0,\alpha_{\nu_0})=\disp\min_{\nu\in\mathbb{N}\cup\{0\}}F(0,\alpha_\nu)$ is satisfied. 
We use the sequence of curl-free vector fields $\{ \bu_n \}_{n \in \N}$ in \eqref{test_u} again with \eqref{test_v},
however for $\eps = 3 - N/2 - \gamma$.
Then, as in the proof of Theorem \ref{Theorem:Hardy-Leray}, we obtain the following expression:
\begin{align*}
	\frac{\int_{\re^N} |\bx|^{2\gamma}|\Delta \bu_n|^2 d{x}}{\int_{\re^N} |\bx|^{2\gamma-4}|\bu_n|^2 d{x}} 
	&=  \frac{\iint_{\Sp^{N-1} \times \re} \ol{f_n} Q_2(\la,-\Delta_{\bsig})f_n d\sigma d\la}
	{\iint_{\Sp^{N-1} \times \re} \ol{f_n} P_1(\la,-\Delta_{\bsig})f_n d\sigma d\la} \notag \\
	&=  \frac{\int_{\re} (\eps^2 + \la^2) Q_2(\la,\al_{\nu_0}) |\wh{h_n}(\la)|^2 d\la}
	{\int_{\re} (\eps^2 + \la^2) P_1(\la,\al_{\nu_0}) |\wh{h_n}(\la)|^2 d\la} \notag \\
	&= \frac{\int_{\re} Q_{02}(\la,\al_{\nu_0}) |\wh{h_n}(\la)|^2  d\la}{\int_{\re} P_{01}(\la,\al_{\nu_0}) |\wh{h_n}(\la)|^2  d\la},
\end{align*}
where $P_{01}(\la,\al)$ is as in \eqref{P_01} and
\begin{align*}
	Q_{02}(\la,\al) = (\eps^2+\la^2) Q_2(\la,\al)
\end{align*}
is a polynomial of $\la$.
When $\eps = 0$ and $\al_{\nu_0} = 0$,
by using the facts
\[
	Q_{02}(\la,0) =4(N-2)^2\la^2 + (N^2-4N+8)\la^4 + \la^6
\]
and $P_{01}(\la,0) = \la^2$, we prove that
\[
	\lim_{n \to \infty} \frac{\int_{\re^N} |\bx|^{2\gamma}|\Delta \bu_n|^2 d{x}}{\int_{\re^N} |\bx|^{2\gamma-4}|\bu_n|^2 d{x}} 
	= 4(N-2)^2 = F(0,0).
\]
Thus as in the proof of Theorem \ref{Theorem:Hardy-Leray}, we can show that
\[
	\lim_{n \to \infty} \frac{\int_{\re^N} |\bx|^{2\gamma}|\Delta \bu_n|^2 d{x}}{\int_{\re^N} |\bx|^{2\gamma-4}|\bu_n|^2 d{x}} 
	=F(0,\alpha_{\nu_0})
\]
for all cases $\eps^2 + \al_{\nu_0} \ne 0$ and $\eps^2 + \al_{\nu_0} = 0$.
This leads to the optimality of $R_{N, \gamma}$.
\qed


\section{Appendix.}

In this appendix, we prove technical lemmas.

\begin{lemma}
\label{Lemma:appendix}

Let $(X, \mathcal{M}, \mu)$ be a measure space and let $\xi, \eta: X \to \re$ be a $\mu$-measurable function such that $\xi \ne 0$ $\mu$-a.e.
Suppose $g: X \to \re$ is a $\mu$-measurable function satisfying, $\xi g \ge 0$ $\mu$-a.e., $0 < \int_X \xi g d\mu < \infty$, and $\int_X |\eta g| d\mu < \infty$.
Then we have
\[
	\frac{\int_X \eta g d\mu}{\int_X \xi g d\mu} \ge {\rm ess} \, \inf_{x \in X} \frac{\eta(x)}{\xi(x)}.
\]
\end{lemma}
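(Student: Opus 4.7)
Set $c := {\rm ess}\,\inf_{x \in X} \eta(x)/\xi(x) \in [-\infty, +\infty]$; this essential infimum is well-defined because $\xi \ne 0$ holds $\mu$-a.e. The integrability hypotheses make $\int_X \eta g \, d\mu$ and $\int_X \xi g \, d\mu$ finite real numbers, so when $c=-\infty$ the claimed inequality is automatic. I therefore restrict attention to the case $c \in \re$. My strategy is to upgrade the definition of $c$ to the pointwise a.e. bound $\eta\, g \ge c\,\xi\, g$ on $X$, integrate it, and divide through by the strictly positive quantity $\int_X \xi g \, d\mu$.

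The first step is the pointwise comparison. By the very definition of the essential infimum, $\eta(x)/\xi(x) \ge c$ holds for $\mu$-a.e.\ $x$, which is equivalent to $\eta(x) \ge c\,\xi(x)$ on the set $\{\xi>0\}$ and to $\eta(x) \le c\,\xi(x)$ on $\{\xi<0\}$ (up to null sets), while $\{\xi=0\}$ is itself null. Next I exploit the sign condition $\xi g \ge 0$ $\mu$-a.e.: it forces $g\ge 0$ wherever $\xi>0$ and $g\le 0$ wherever $\xi<0$, up to null sets. Multiplying the inequality on $\{\xi>0\}$ by $g\ge 0$ preserves its direction, and multiplying the inequality on $\{\xi<0\}$ by $g\le 0$ reverses it; in both cases I obtain
\[
\eta(x)\,g(x) \;\ge\; c\,\xi(x)\,g(x) \qquad \text{for $\mu$-a.e.\ } x \in X.
\]

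The final step is integration. The assumed integrability of $\eta g$ and $\xi g$ lets me integrate the pointwise bound over $X$, giving
\[
\int_X \eta g \, d\mu \;\ge\; c \int_X \xi g \, d\mu,
\]
and dividing by the strictly positive number $\int_X \xi g\, d\mu$ produces the desired estimate. I do not anticipate any real obstacle; the only point requiring care is the sign-based case split for $\xi$, which is precisely why the hypothesis must be phrased as $\xi g \ge 0$ rather than the simpler $g \ge 0$, and the $\mu$-nullity of $\{\xi=0\}$ takes care of the degenerate set automatically.
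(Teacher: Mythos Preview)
Your proof is correct and follows essentially the same route as the paper: from $\eta/\xi \ge c$ a.e.\ and $\xi g \ge 0$ a.e.\ one deduces $\eta g \ge c\,\xi g$ a.e., integrates, and divides. The paper compresses your sign-of-$\xi$ case split into the single observation that multiplying the inequality $\eta/\xi \ge c$ by the nonnegative quantity $\xi g$ yields $\eta g = (\eta/\xi)\,\xi g \ge c\,\xi g$ directly; your explicit treatment of the case $c=-\infty$ is a detail the paper omits but is of course trivially satisfied.
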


\begin{proof}
Let $I = {\rm ess} \, \inf_{x \in X} \frac{\eta(x)}{\xi(x)}$.
Then $\frac{\eta}{\xi} \ge I$ $\mu$-a.e. 
Multiply the both sides by $\xi g \ge 0$, we have $\eta g = \frac{\eta}{\xi} \xi g \ge I \xi g$ $\mu$-a.e..
By integrating over $X$, we obtain
\[
	\int_X \eta g d\mu \ge I \int_X \xi g d\mu
\]
which leads the result.
\end{proof}

\begin{lemma}
\label{Lemma:A2}
Let $F(0, \al)$ be defined by \eqref{F0al}.
Then we have
\[	
	\frac{\pd F(0,\al)}{\pd\al} \ge 0 \quad \text{for} \quad \al \ge \max \{ \al_1, \al_{\eps} \}. 
\]
\end{lemma}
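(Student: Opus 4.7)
The plan is to split on the sign of $\eps$ and compute $\pd_\al F(0,\al)$ directly. The case $\eps = 0$ is immediate: by \eqref{F0al}, $F(0,\al) = \al^2 + 4\al$, so $\pd_\al F(0,\al) = 2\al + 4 > 0$ on $\al \ge 0$. The substance therefore lies in the case $\eps \ne 0$, where
\[
F(0,\al) = \frac{(\eps-2)^2 + \al}{\eps^2 + \al}\,(\al_\eps - \al)^2,
\]
to which I would apply the quotient rule, using $\eps^2 - (\eps-2)^2 = 4(\eps-1)$ to factor the result as
\[
\pd_\al F(0,\al) = \frac{\al_\eps - \al}{(\eps^2 + \al)^2}\Bigl[4(\eps-1)(\al_\eps - \al) - 2\bigl((\eps-2)^2 + \al\bigr)\bigl(\eps^2 + \al\bigr)\Bigr].
\]
Since $\al \ge \al_\eps$ by hypothesis, the leading factor is nonpositive, and $\pd_\al F(0,\al) \ge 0$ reduces to the quadratic-in-$\al$ inequality
\[
H(\al) := \bigl((\eps-2)^2 + \al\bigr)\bigl(\eps^2 + \al\bigr) - 2(1-\eps)(\al - \al_\eps) \ge 0.
\]

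If $\eps \ge 1$, the subtracted term is already nonpositive and $H(\al) \ge 0$ is trivial. When $\eps < 1$, I compute $H'(\al) = 2\al + 2(\eps^2 - \eps + 1)$; since $\eps^2 - \eps + 1 = (\eps - \tfrac12)^2 + \tfrac34 > 0$, $H$ is strictly increasing on $[0,\infty)$, so it suffices to verify $H(\al^\ast) \ge 0$ at $\al^\ast := \max\{\al_1, \al_\eps\}$. If $\al^\ast = \al_\eps$, then $H(\al_\eps) = ((\eps-2)^2 + \al_\eps)(\eps^2 + \al_\eps) \ge 0$ is immediate.

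The hard part will be the remaining subcase $\al^\ast = \al_1 = N - 1$, which occurs precisely when $1 - N < \eps < 1$. Using the identity $\al_\eps - \al_1 = (\eps - 1)(\eps + N - 1)$ together with the substitution $u = \eps - 1 \in (-N, 0)$ and the binomial rewrite $((u-1)^2 + (N-1))((u+1)^2 + (N-1)) = (u^2 + N)^2 - 4u^2$, the quantity $H(\al_1)$ collapses to the single-variable polynomial
\[
g(u) = u^4 - 2u^3 - 4u^2 + N^2.
\]
The core obstacle is then showing $g(u) \ge 0$ on $(-N, 0)$ for every $N \ge 2$. From $g'(u) = 2u(2u^2 - 3u - 4)$, the unique local minimum of $g$ on this interval lies at $u^\ast = (3 - \sqrt{41})/4 \approx -0.85$; using the defining relation $2(u^\ast)^2 = 3u^\ast + 4$ to reduce higher powers yields $g(u^\ast) = N^2 - \frac{299 - 41\sqrt{41}}{32}$. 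Since $\frac{299 - 41\sqrt{41}}{32} < 2 \le N^2$ for $N \ge 2$, this closes the argument.
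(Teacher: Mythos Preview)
Your proof is correct and follows essentially the same route as the paper: both compute $\pd_\al F(0,\al)$, factor out $(\al-\al_\eps)/(\eps^2+\al)^2$, and reduce the question to the nonnegativity of the same quadratic (your $H(\al)$ is exactly the paper's $f_\eps(\al)$), then use its monotonicity in $\al$ to check only the endpoint $\al^\ast=\max\{\al_1,\al_\eps\}$. The one substantive addition is that you actually carry out the endpoint verification in the case $\al^\ast=\al_1$ --- via the substitution $u=\eps-1$ and an explicit minimization of $g(u)=u^4-2u^3-4u^2+N^2$ --- whereas the paper simply records $f_\eps(N-1)=\eps^4-6\eps^3+8\eps^2-2\eps+N^2-1$ and calls its nonnegativity on $-(N-1)\le\eps\le1$ ``an elementary fact.''
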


\begin{proof}
Recall $\al_1 = N-1$ and $\al_\eps = \eps(\eps+N-2)$. 
It is enough to show Lemma when $\eps \ne 0$ and $F(0, \al) = \frac{(\eps-2)^2+\al}{\eps^2+\al} (\al_\eps-\al)^2$.
A direct computation shows that
\begin{align*}
	&\frac{\pd F(0, \al)}{\pd \al} = \frac{2(\al - \al_{\eps})}{(\al + \eps^2)^2} f_{\eps}(\al), \quad \text{where} \\
	&f_{\eps}(\al) = \al^2 + 2(\eps^2 - \eps + 1) \al + \eps^2(\eps-1)^2 + 2\al_{\eps}(1-\eps). 
\end{align*}
Since $\eps^2 - \eps + 1 >0$ for any $\eps \in \re$, we see that $f_{\eps}$ is strictly increasing for $\al \ge 0$.
Thus if we show 
(i) $f_\eps(\al_\eps) \ge 0$ if $\al_\eps \ge \al_1$, and (ii) $f_{\eps}(\al_1) \ge 0$ if $\al_1 \ge \al_\eps$,
then $f_{\eps}(\al) \ge 0$ for any $\al \ge \max \{ \al_1, \al_\eps \}$, which concludes Lemma.

To prove (i), we observe that $f_{\eps}(\al_{\eps}) = (\al_{\eps} + \eps^2)(\al_{\eps} + (\eps-2)^2)$.
Thus if $\al_{\eps} \ge \al_1 = N-1 > 0$, clearly we have $f_{\eps}(\al_{\eps}) > 0$.

To prove (ii), we observe that $f_{\eps}(\al_1) = f_{\eps}(N-1) = \eps^4 - 6\eps^3 + 8\eps^2 -2 \eps + N^2-1$.
We need to prove this quartic function is nonnegative for $\eps \in \re$ such that $\al_1 \ge \al_\eps$, i.e., $-(N-1) \le \eps \le 1$.
However, this is an elementary fact.
\end{proof}

%
%

\vspace{1em}\noindent
{\bf Acknowledgments.}

The second author (F.T.) was supported by JSPS Grant-in-Aid for Scientific Research (B), No.15H03631.


\begin{thebibliography}{99}



%



\bibitem{Brezis-Vazquez}
H. Brezis, and J. L. V\'{a}zquez:
{\it Blow-up solutions of some nonlinear elliptic problems},
\newblock Rev. Mat. Univ. Complut. Madrid., {\bf 10}, (1997), No. 2, 443--469.

\bibitem{Costin-Mazya} 
O. Costin, and V. Maz'ya: 
{\it Sharp Hardy-Leray inequality for axisymmetric divergence-free fields},
\newblock Calculus of Variations and Partial Diff. Eq., {\bf 32} (2008), no. 4, 523--532.


%






\bibitem{Ghoussoub-Moradifam}
N. Ghoussoub, and A. Moradifam: 
{\em Bessel pairs and optimal Hardy and Hardy-Rellich inequalities},
\newblock Math. Ann., {\bf 349}, (2011), 1--57.


\bibitem{Ghoussoub-Moradifam(book)}
N. Ghoussoub, and A. Moradifam:
{\em Functional inequalities: new perspectives and new applications,}
\newblock Mathematical Surveys and Monographs, 187.
American Mathematical Society, Providence, RI, 2013. xxiv+299.



\bibitem{Hamamoto}
N. Hamamoto:
{\em Rellich-Leray inequality for divergence free vector fields,}
\newblock In preparation.

\bibitem{Hardy}
G. H. Hardy, J. E. Littlewood, and G. Polya:
{\em Inequalities,}
\newblock Cambridge University Press, 1952.


\bibitem{Ladyzhenskaya}
O. A. Ladyzhenskaya:
{\em The mathematical theory of viscous incompressible flow, Second edition, revised and enlarged,}
\newblock Mathematics and its Applications, Vol. 2 Gordon and Breach, Science Publishers, New York-London-Paris, (1969).




\bibitem{Leray}
J. Leray:
{\it \'Etude de diverses \'equations int\'egrales non lin\'eaires et de quelques probl\`emes que pose l'hydrodynamique},
\newblock J. Math. Pures Appl., {\bf 12}, 1933. 1--82. 


\bibitem{Mazya}
V. Maz'ya:
{\em Sobolev spaces with applications to elliptic partial differential equations. Second, revised and augmented edition,}
\newblock Grundlehren der Mathematischen Wissenschaften, {\bf 342}.
Springer, Heidelberg, 2011. xxviii+866 pp.

\bibitem{Opic-Kufner}
B. Opic, and A. Kufner:
{\em Hardy-type inequalities},
\newblock Pitman Res. Notes Math. Ser., {\bf 219},
Longman Sci. Tech., Harlow, 1990.




\bibitem{T-Z}
A. Tertikas, and N. B. Zographopoulos:
{\em Best constants in the Hardy-Rellich inequalities and related improvements,}
\newblock Adv. Math., {\bf 209} 407--459, (2007)

\end{thebibliography}
\end{document}